\newcommand{\TdR}{\td\times\rd}
\title{Proximal aiming in weak KAM theory with nonsmooth Lagrangian}
\author{Yurii Averboukh}\address{Krasovskii Institute of Mathematics and Mechanics, \\ & Yekaterinburg, Russia; \\ & HSE University, Moscow, Russia}
\email{ayv@imm.uran.ru} 
\date{}
\begin{document}
	\maketitle
	\begin{abstract}
		This work extends weak KAM theory to the case of a nonsmooth Lagrangian satisfying a superlinear growth condition. Using the solution of a weak KAM equation that is a stationary Hamilton-Jacobi equation and the proximal aiming method, we construct a family of discontinuous feedback strategies that are nearly optimal for every time interval. This result leads to an analogue of the weak KAM theorem. Additionally, as in classical weak KAM theory, we demonstrate that the effective Hamiltonian (Ma\~{n}\'{e} critical value) can be determined by solving a linear programming problem in the class of probability measures.
		\keywords{weak KAM theory, extremal shift method, universal feedback strategies, proximal subdifferential, viscosity solutions}
		\msccode{49J52, 49J05, 35F21, 90C05}
	\end{abstract}
	
	\section{Introduction}
	This paper focuses on constructing feedback strategies for a calculus of variations problem on a $d$-dimensional torus, where the time interval is sufficiently large and the Lagrangian is nonsmooth.
	
	Let us briefly outline the problem under consideration. We examine settings of weak KAM theory \cite{Fathi2012,Sorrentino,Biryuk2010,Mane1992,Mather1991}, which seeks a function $\varphi$ and a constant $\overline{H}$ such that, for each $r>0$, the value function in the minimization problem for the functional
	\begin{equation}\label{intrd:func:L_varphi}\int_0^rL(x(t),\dot{x}(t))dt+\varphi(x(r))\end{equation} equals $\varphi-\overline{H} r$. Here, the function $L$ is called the Lagrangian, and the quantity $\overline{H}$ is referred to as the effective Hamiltonian or the Ma\~{n}\'{e} critical value. In particular, weak KAM theory implies that for large $r$, the solution to the problem
	\begin{equation}\label{intrd:intro:payoff}\text{minimize }\frac{1}{r}\int_0^rL(x(t),\dot{x}(t))dt\end{equation} converges to the constant $-\overline{H}$.
	
	Typically, weak KAM theory \cite{Fathi2012,Sorrentino,Biryuk2010} considers a smooth Lagrangian defined on the tangent bundle of a finite-dimensional compact manifold satisfying Tonelli conditions. Moreover (see \cite{Evans2008,Evans2004,Bernard2012,Biryuk2010,Fathi2007}), the pair $(\varphi,\overline{H})$ satisfies the weak KAM equation:
	\begin{equation}\label{intrd:eq:HJ}H(x,-\nabla \varphi)=\overline{H},\end{equation} where the unknowns are a continuous function $\varphi$ and a constant $\overline{H}$, and the Hamiltonian $H$ is defined via the Legendre transform:
	\begin{equation}\label{intrd:intro:H}H(x,p)\triangleq \max_{v\in\rd}\Big[pv-L(x,v)\Big].\end{equation}
	Additionally, the constant $-\overline{H}$ equals the value in the minimization problem of the functional
	\[\int_{\td\times\rd}L(x,v)\mu(d(x,v)),\] over the set of all holonomic measures or measures invariant under the flow generated by the Euler-Lagrange equation
	\[\frac{d}{dt}L_v(x,v)=L_x(x,v).\] A measure providing the minimum is called a Mather measure.
	Although the function $\varphi$ is generally nonsmooth and satisfies the Hamilton-Jacobi equation in the viscosity sense, it has a derivative almost everywhere with respect to the projection of the Mather measure onto the configuration space. Thus, the continuous feedback strategy $\mathbbm{v}:\td\rightarrow\rd$ satisfying on the support of the projection of the Mather measure the equality
	\[\mathbbm{v}(x)=H_p(x,-\nabla \varphi(x)),\] is optimal.
	
	These results heavily rely on the smoothness of the Lagrangian (and, consequently, the Hamiltonian). Several generalizations exist. For instance, the case of a noncompact manifold is studied in \cite{Fathi2012}. Additionally, \cite{Gomes2014} explores weak KAM theory for piecewise smooth Lagrangians.
	
	A notable feature of weak KAM theory is that the existence of a viscosity solution to the weak KAM equation~\eqref{intrd:eq:HJ} which is crucial for constructing optimal strategies and characterizing of $\overline{H}$ can be established without the smoothness assumption on the Hamiltonian (see \cite{lions_et_al}). This raises the question of extending weak KAM theory to the case of essentially nonsmooth Lagrangians. This paper addresses this question partially. Our methodology utilizes the proximal aiming technique presented in \cite{Clarke1999}. This approach extends the general extremal shift method initially formulated for differential games (see \cite{NN_PDG}). The key benefit of proximal aiming lies in its capacity to generate nearly optimal strategies while ensuring only linear growth of the approximation error over time.
	
	We focus on the simplest compact manifold that is the $d$-dimensional torus $\td$. Our main result establishes that a discontinuous strategy realizing the proximal aiming (i.e., the extremal shift rule with directions determined by the Moreau-Yosida transform) yields nearly optimal solutions for the calculus of variations problem with criterion \eqref{intrd:intro:payoff} as the time interval tends to infinity. Furthermore, we extend the Mather measure technique and demonstrate that the Ma\~{n}\'{e} critical value can be characterized by minimizing the action of probability measures on the Lagrangian over the class of holonomic probability measures.
	
	The paper is organized as follows. Section~\ref{sect:notation} discusses the phase space, admissible trajectories and concepts of nonsmooth analysis including the definition of viscosity solutions to equation~\eqref{intrd:eq:HJ}. We also recall the Moreau-Yosida transform and its properties which are key to subsequent constructions. Section~\ref{sect:feedback} introduces the proximal aiming method and demonstrates its application in constructing strategies that guarantee a value of the functional~\eqref{intrd:func:L_varphi} no less than $\varphi(y)-\overline{H}r$  on $[0,r]$ with arbitrary accuracy. Here $(\varphi,\overline{H})$ is a supersolution to equation~\eqref{intrd:eq:HJ}. Section~\ref{sect:lower_bound} shows that, if $(\varphi,\overline{H})$ is a subsolution to equation~\eqref{intrd:eq:HJ}, the value of the payoff functional~\eqref{intrd:func:L_varphi} at the initial point $y$ does not exceed $\varphi(y)-\overline{H}r$. The implications of these estimates are summarized in Section~\ref{sect:weak_KAM}. Finally, Section~\ref{sect:mather} discusses the characterization of the Ma\~{n}\'{e} critical value $\overline{H}$ via the Mather measure.
	
	\section{Definitions and notation}\label{sect:notation}
	\subsection{Phase space}
	Let $\td$ denote the $d$-dimensional torus (i.e., $\td\triangleq\rd/\mathbb{Z}^d$) which serves as the configuration space. The elements of $\td$ are sets of the form
	\[x=\{x'+n:\, n\in \mathbb{Z}^d\},\] where $x'$ is an element of $\rd$. Below, we assume that $\rd$ consists of column vectors, and the norm on $\rd$ is denoted by $|\cdot|$.
	One can add a vector in $\rd$ to an element of $\td$. Specifically, if $x\in\td$ and $v\in\rd$,
	\[x+v=\{x'+n:\,n\in\mathbb{Z}^d\},\] where $x'$ is any element of $x$.
	
	The distance between $x,y\in\td$ is defined as:
	\[\rho_{\td}(x,y)\triangleq \inf\big\{|y'-x'|:\, x'\in x,\, y'\in y\big\}.\] Equivalently, the distance can be computed as:
	\[\rho_{\td}(x,y)=\inf\big\{|v|:\, v\in\rd,\, x+v=y\big\}.\] The tangent space to $\td$ is naturally $\rd$, and the cotangent space is $\rds$ consisting of $d$-dimensional row vectors. The distance on $\rds$ is also denoted by $|\cdot|$. For $p\in\rds$ and $v\in\rd$, $pv$ represents the matrix multiplication of a row and a column. The transpose operation is denoted by $\top$. The quantity $\mathbbm{1}_{P}$ equals $1$ or $0$ depending on the truth value of the logical expression $P$.
	Finally, $\mathbbm{B}_\alpha$ stands for the ball in $\rd$ of radius $\alpha$ centered at the origin.
	
	\subsection{Admissible trajectories}
	We consider absolutely continuous curves $x(\cdot):[0,r]\rightarrow\td$. By \cite[Theorems 2.14, 2.17]{Buttazzo1998}, each such $x(\cdot)$ is continuous. We interpret $\dot{x}(t)$ as the velocity at time~$t$. Furthermore, it is convenient to describe absolutely continuous curves  in terms of controlled processes. Here, a controlled process on $[0,r]$ is a pair $(x(\cdot),v(\cdot))$ such that
	\begin{itemize}
		\item $v(\cdot)\in L^1([0,r];\rd)$;
		\item $x(\cdot)\in C([0,r],\td)$, and, for all $s\in [0,r]$,
		\[x(s)-x(0)=\int_0^sv(t)dt.\]
	\end{itemize}
	
	\subsection{Nonsmooth analysis and viscosity solutions}
	We adopt an approach to viscosity solutions based on nonsmooth analysis \cite{Subbotin}. It is equivalent to one involving  test functions \cite{Subbotin,Bardi2008c}. First, we introduce the definitions of sub- and superdifferentials following \cite{Penot2013,Clarke1998a}.
	\begin{definition}\label{notation:def:subsuper_diff} Let $\phi:\td\rightarrow\mathbb{R}$ and $x\in\td$. The set 
		\[\partial_D^-\phi(x)\triangleq \Bigg\{p\in\rds:\,\liminf_{h\downarrow 0,\, w'\rightarrow w}\frac{\phi(x+hw')-\phi(x)}{h}\geq pw\text{ for all }w\in\rd\Bigg\}\] is called the Hadamard subdifferential.
		
		Similarly, the set \[\partial_D^+\phi(x)\triangleq \Bigg\{p\in\rds:\,\limsup_{h\downarrow 0,\, w'\rightarrow w}\frac{\phi(x+hw')-\phi(x)}{h}\leq pw\text{ for all }w\in\rd\Bigg\}\] is called the Hadamard superdifferential.
	\end{definition}
	We also use proximal sub- and superdifferentials \cite{Clarke1999,Clarke1998a,Penot2013}.
	\begin{definition}\label{notation:def:proximal} A vector $p\in\rds$ is a proximal subgradient of $\phi$ at $x$ if there exist $\alpha>0$ and $\sigma\geq 0$ such that, for all $w\in\mathbbm{B}_\alpha$,
		\[\phi(x+w)-\phi(x)\geq pw-\sigma|w|^2.\] The set of all proximal subgradients is the proximal subdifferential of $\phi$ at $x$. It is denoted by $\partial_P^-\phi(x)$.
		
		The proximal superdifferential of $\phi$ at $x$   consists of all $p\in\rds$ such that for some $\alpha>0$ and $\sigma\geq 0$,
		\[\phi(x+w)-\phi(x)\leq pw+\sigma|w|^2\] holds for all $w\in \mathbbm{B}_\alpha$. We denote the proximal subdifferential of $\phi$ at $x$  by $\partial_P^+\phi(x)$. The elements of $\partial_P^+\phi(x)$ are called supergradients of $\phi$ at $x$.
	\end{definition}
	Notice that 
	\begin{equation}\label{notation:intro:diffs}
		\partial_P^-\phi(x)\subset\partial_D^-\phi(x),\, \partial_P^+\phi(x)\subset\partial_D^+\phi(x).
	\end{equation}
	
	\begin{definition}\label{notation:def:subsolution} A pair $(\varphi,\overline{H})$, where $\varphi:\td\rightarrow \mathbb{R}$ is upper semicontinuous and $\overline{H}\in\mathbb{R}$, is a subsolution to equation~\eqref{intrd:eq:HJ} if, for all $x\in\td$ and $p\in \partial_D^+\phi(x)$,
		\[H(x,-p)\leq \overline{H}.\]
		
		A pair $(\varphi,\overline{H})$, where $\varphi:\td\rightarrow \mathbb{R}$ is lower semicontinuous and $\overline{H}\in\rd$, is a supersolution to equation~\eqref{intrd:eq:HJ} if, for all $x\in\td$ and $p\in \partial_D^-\phi(x)$,
		\[H(x,-p)\geq \overline{H}.\]
		
		A pair $(\phi,\overline{H})$, where $\phi:\td\rightarrow\mathbb{R}$ is continuous and $\overline{H}\in \mathbb{R}$, is a viscosity solution to equation~\eqref{intrd:eq:HJ} provided that it is both a sub- and supersolution.
	\end{definition}
	
	If $H$ is continuous and satisfies the coercivity condition
	\begin{equation}\label{notation:cond:H_coercitivity}
		\lim_{|p|\rightarrow\infty} \inf_{x\in \td}\frac{H(x,p)}{|p|}=+\infty,
	\end{equation} a viscosity solution to equation~\eqref{intrd:eq:HJ} exists (see \cite[Theorem~1]{lions_et_al}, \cite[Theorem 1.30]{Tran}). Moreover, $\overline{H}$ is uniquely determined. Additionally, \cite[Proposition II.4.1]{Bardi2008c} shows that, if $(\varphi,\overline{H})$ is a viscosity solution to~\eqref{intrd:eq:HJ}, then $\varphi$ is Lipschitz continuous with a constant determined only by $H$.
	
	\subsection{Moreau-Yosida transform}
	The Moreau-Yosida transform plays a key role in our constructions. It was studied in \cite{Clarke1999} for $\rd$. For the sake of completeness, we discuss its properties for the torus.
	\begin{definition}\label{notation:intrd:IM} Let $\phi:\td\rightarrow\mathbb{R}$ and $\varkappa>0$. The Moreau-Yosida transform of $\phi$ is the function $\phi_\varkappa$ defined by the following rule:
		\[\phi_\varkappa(x)\triangleq\inf\Bigg\{\phi(x+v)+\frac{1}{2\varkappa}|v|^2:\, v\in\rd\Bigg\}.\] Equivalently,
		\[\phi_\varkappa(x)= \inf\Bigg\{\phi(y)+\frac{1}{2\varkappa}\rho_{\td}^2(x,y):\, y\in\td\Bigg\}.\]
	\end{definition}
	Let $b_\varkappa[x]$ denote an element of $\rd$ such that 
	\begin{equation}\label{notation:intro:y_varkapp}
		\phi(x+b_\varkappa[x])+\frac{1}{2\varkappa}\big|b_\varkappa[x]\big|^2=\phi_\varkappa(x).
	\end{equation}
	
	\begin{proposition}\label{notation:prop:y_varkappa} Assume that $\phi$ is Lipschitz continuous, then 
		\[\big|b_\varkappa[x]\big|\leq C_1\varkappa,\] where $C_1=2K$.
	\end{proposition} 
	\begin{proof}
	Notice that 
		\[\frac{1}{2\varkappa}\big|b_\varkappa[x]\big|^2\leq \phi(x)-\phi(x+b_\varkappa[x])\leq K\big|b_\varkappa[x]\big|.\] This gives the statement of the proposition.		
	\end{proof}
	
	\begin{corollary}\label{notation:corollary:vicinity} Let $\phi$ be Lipschitz continuous with constant $K$. Then, for all $x\in \td$,
		\[|\phi(x)-\phi_\varkappa(x)|\leq C_2\varkappa,\] where $C_2$ depends only on $\phi$.
	\end{corollary}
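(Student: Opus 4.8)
The plan is to establish the two-sided bound $0\le \phi(x)-\phi_\varkappa(x)\le C_2\varkappa$ directly from Definition~\ref{notation:intrd:IM}, using only the Lipschitz property of $\phi$ together with the boundedness of $\td$. The upper estimate $\phi_\varkappa(x)\le \phi(x)$ is trivial: it suffices to take $v=0$ in the infimum defining $\phi_\varkappa(x)$.

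For the lower estimate, I would insert the Lipschitz inequality $\phi(x+v)\ge \phi(x)-K|v|$ into the infimum and reduce to a one-dimensional minimization:
\[\phi_\varkappa(x)\ge \inf_{v\in\rd}\Bigl[\phi(x)-K|v|+\frac{1}{2\varkappa^2}|v|^2\Bigr]=\phi(x)+\min_{t\ge 0}\Bigl[-Kt+\frac{t^2}{2\varkappa^2}\Bigr]=\phi(x)-\frac{K^2}{2}\varkappa^2,\]
the minimum being attained at $t=K\varkappa^2$. (Alternatively one may invoke Proposition~\ref{notation:prop:y_varkappa}: from $\phi_\varkappa(x)=\phi(x+b_\varkappa[x])+\tfrac{1}{2\varkappa^2}|b_\varkappa[x]|^2\ge \phi(x)-K|b_\varkappa[x]|$ and $|b_\varkappa[x]|\le C_1\varkappa^{3/2}$ one gets $\phi(x)-\phi_\varkappa(x)\le KC_1\varkappa^{3/2}$, which is again $O(\varkappa)$ for bounded $\varkappa$.) Combining the two estimates yields $0\le \phi(x)-\phi_\varkappa(x)\le \tfrac{K^2}{2}\varkappa^2$ for every $x\in\td$, with a constant that does not depend on $x$.

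It remains to convert the quadratic bound into the asserted linear one, which I would do by distinguishing two ranges of $\varkappa$. If $\varkappa\le 1$ then $\varkappa^2\le\varkappa$, so $|\phi(x)-\phi_\varkappa(x)|\le \tfrac{K^2}{2}\varkappa$. If $\varkappa> 1$, then, since $\phi_\varkappa(x)\ge\inf_{\td}\phi$ and $\phi(x)\le\sup_{\td}\phi$, one has $|\phi(x)-\phi_\varkappa(x)|\le 2\|\phi\|_{C(\td)}< 2\|\phi\|_{C(\td)}\varkappa$. Setting $C_2\triangleq\max\{K^2/2,\,2\|\phi\|_{C(\td)}\}$, which depends only on $\phi$, gives the claim for all $\varkappa>0$ and all $x\in\td$. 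I do not foresee any real difficulty here; the only mildly delicate points are the uniformity in $x$ (automatic, since all constants involve only $K$ and $\|\phi\|_{C(\td)}$) and the behaviour for large $\varkappa$, handled by the case split above.
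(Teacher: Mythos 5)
Your proof is correct, and it follows a genuinely more elementary route than the paper's. The paper works through the minimizer: it writes $|\phi(x)-\phi_\varkappa(x)|\leq |\phi(x)-\phi(x+b_\varkappa[x])|+\tfrac{1}{2\varkappa^2}|b_\varkappa[x]|^2$ and then invokes both bounds of Proposition~\ref{notation:prop:y_varkappa} ($|b_\varkappa[x]|\leq\sqrt{2\|\phi\|_{C(\td)}}\,\varkappa$ for the Lipschitz term and $|b_\varkappa[x]|\leq C_1\varkappa^{3/2}$ for the quadratic penalty), obtaining a bound of order $\varkappa$ uniformly in $\varkappa>0$ with no case distinction. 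You bypass the minimizer entirely: besides the trivial inequality $\phi_\varkappa\leq\phi$ (take $v=0$), you substitute $\phi(x+v)\geq\phi(x)-K|v|$ into the infimum and solve the one-dimensional quadratic problem, which yields the sharper two-sided estimate $0\leq\phi(x)-\phi_\varkappa(x)\leq \tfrac{K^2}{2}\varkappa^2$; the stated linear bound then follows from the split $\varkappa\leq 1$ versus $\varkappa>1$, the latter handled by $\inf_{\td}\phi\leq\phi_\varkappa\leq\phi\leq\sup_{\td}\phi$. What your approach buys is independence from Proposition~\ref{notation:prop:y_varkappa} (you do not even need existence of $b_\varkappa[x]$) and the stronger quadratic rate for small $\varkappa$; what the paper's buys is brevity, since the minimizer bounds are already available, and a single formula valid for all $\varkappa$ without the case split. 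Either way the constant involves only $K$ and $\|\phi\|_{C(\td)}$, hence depends only on $\phi$, so the corollary as stated is fully proved.
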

	\begin{proof}
		From the Lipschitz continuity of $\phi$ and the definition of the vector $b_\varkappa[x]$,
		\[
	|\phi(x)-\phi_\varkappa(x)|\leq |\phi(x)-\phi(x+b_\varkappa[x])|+\frac{1}{2\varkappa}\big|b_\varkappa[x]\big|^2.\]
		Using Proposition~\ref{notation:prop:y_varkappa}, the right-hand side is bounded by
	\[KC_1\varkappa+\frac{1}{2\varkappa}C_1\varkappa^2.\] From this the corollary follows directly.
	\end{proof}
	
	\begin{proposition}\label{notation:prop:Lips_IM} If $\phi$ is Lipschitz continuous with constant $K$, then, for every $\varkappa>0$, $\phi_\varkappa$ is also Lipschitz continuous with the same constant.
	\end{proposition}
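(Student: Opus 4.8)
The plan is to exploit that the Moreau--Yosida transform is, up to a harmless reparametrization, an infimal convolution of $\phi$ with the quadratic $v\mapsto\frac{1}{2\varkappa^2}|v|^2$, and that infimal convolution with an arbitrary function preserves a Lipschitz estimate. Concretely, I would fix $x_1,x_2\in\td$ and first bound $\phi_\varkappa(x_1)-\phi_\varkappa(x_2)$ from above by $K\rho_{\td}(x_1,x_2)$; the reverse inequality then follows by symmetry, giving $|\phi_\varkappa(x_1)-\phi_\varkappa(x_2)|\le K\rho_{\td}(x_1,x_2)$.

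First I would invoke Proposition~\ref{notation:prop:y_varkappa}: a Lipschitz function on the compact torus is bounded, so a minimizing vector $b_\varkappa[x_2]\in\rd$ exists, that is, $\phi_\varkappa(x_2)=\phi(x_2+b_\varkappa[x_2])+\frac{1}{2\varkappa^2}\big|b_\varkappa[x_2]\big|^2$. Using this same vector as an admissible competitor in the infimum that defines $\phi_\varkappa(x_1)$ yields $\phi_\varkappa(x_1)\le\phi(x_1+b_\varkappa[x_2])+\frac{1}{2\varkappa^2}\big|b_\varkappa[x_2]\big|^2$. Subtracting the two relations, the quadratic terms cancel and we are left with $\phi_\varkappa(x_1)-\phi_\varkappa(x_2)\le\phi(x_1+b_\varkappa[x_2])-\phi(x_2+b_\varkappa[x_2])$.

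The final step is to combine the Lipschitz continuity of $\phi$ with the translation invariance of the torus metric, namely $\rho_{\td}(x_1+b,x_2+b)=\rho_{\td}(x_1,x_2)$ for every $b\in\rd$, which is immediate from $\rho_{\td}(x,y)=\inf\{|v|:\,x+v=y\}$. Hence the right-hand side above is at most $K\rho_{\td}(x_1+b_\varkappa[x_2],x_2+b_\varkappa[x_2])=K\rho_{\td}(x_1,x_2)$. Interchanging the roles of $x_1$ and $x_2$ completes the argument.

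There is no genuine obstacle here; the only two points meriting a word of care are the existence of the minimizer $b_\varkappa[x_2]$, which is exactly what Proposition~\ref{notation:prop:y_varkappa} supplies, and the remark that translating both arguments of $\rho_{\td}$ by the same vector leaves the distance unchanged, which is what allows the constant $K$ to be reproduced exactly rather than with a loss.
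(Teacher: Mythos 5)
Your proof is correct and follows essentially the same argument as the paper: use the minimizer $b_\varkappa[\cdot]$ at one point as a competitor in the infimum defining $\phi_\varkappa$ at the other, cancel the quadratic terms, and conclude by the Lipschitz bound on $\phi$ together with symmetry. Your explicit remarks on the existence of the minimizer and on translation invariance of $\rho_{\td}$ are points the paper uses implicitly, so nothing further is needed.
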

	\begin{proof}
		Let $x,y\in \td$. Note that 
		\[\phi_\varkappa(y)\leq \phi\big(y+b_\varkappa[x]\big)+\frac{1}{2\varkappa}\big|b_\varkappa[x]\big|^2.\] Hence,
		\[\phi_\varkappa(y)-\phi_\varkappa(x)\leq \phi\big(y+b_\varkappa[x]\big)-\phi\big(x+b_\varkappa[x]\big)\leq K\rho_{\td}(x,y).\] Swapping $x$ and $y$ yields $\phi_\varkappa(x)-\phi_\varkappa(y)\leq K\rho_{\td}(x,y)$.
	\end{proof}
	
	The following is a variant of \cite[Lemma 3.1]{Clarke1999} for the torus.
	\begin{proposition}\label{notation:prop:prox_diff} For all $x\in \td$, 
		\begin{equation}\label{notation:intro:p_varkappa}p_\varkappa[x]\triangleq \frac{1}{\varkappa}(-b_\varkappa[x])^\top\in\partial_P^-\phi(x+b_\varkappa[x]).\end{equation}
	\end{proposition}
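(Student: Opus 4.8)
The plan is to establish this as the torus counterpart of \cite[Lemma 3.1]{Clarke1999}, using nothing beyond the fact that, by Definition~\ref{notation:intrd:IM} and the defining relation~\eqref{notation:intro:y_varkapp}, the vector $b_\varkappa[x]$ is a \emph{global} minimizer over $\rd$ of the map $v\mapsto \phi(x+v)+\frac{1}{2\varkappa^2}|v|^2$. Write $b\triangleq b_\varkappa[x]$ and $y\triangleq x+b$. Testing this minimality against the competitor $v=b+w$ for an arbitrary $w\in\rd$ gives
\[\phi(y+w)+\frac{1}{2\varkappa^2}|b+w|^2\geq \phi(x+b)+\frac{1}{2\varkappa^2}|b|^2=\phi(y)+\frac{1}{2\varkappa^2}|b|^2.\]
The one point deserving a line of care is the bookkeeping between $\td$ and $\rd$: the expression $\phi(x+v)$ is well defined for every $v\in\rd$, and the quadratic penalty in the Moreau-Yosida transform uses $|v|^2$ with $v\in\rd$ rather than $\rho_{\td}^2(x,x+v)$, so the inequality above holds for the competitor $b+w$ with no correction term.

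Next I would expand $|b+w|^2=|b|^2+2b^\top w+|w|^2$ and rearrange the displayed inequality into
\[\phi(y+w)-\phi(y)\geq \frac{1}{\varkappa^2}(-b)^\top w-\frac{1}{2\varkappa^2}|w|^2=p_\varkappa[x]\,w-\frac{1}{2\varkappa^2}|w|^2,\]
which holds for \emph{all} $w\in\rd$, in particular for all $w\in\mathbbm{B}_\alpha$. Choosing $\alpha=1$ (any positive value works) and $\sigma=\frac{1}{2\varkappa^2}\geq 0$ in Definition~\ref{notation:def:proximal} yields precisely $p_\varkappa[x]\in\partial_P^-\phi(y)=\partial_P^-\phi(x+b_\varkappa[x])$.

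I do not expect a genuine obstacle here: the argument is a direct completion-of-squares computation, and the existence of at least one vector $b_\varkappa[x]$, needed for the statement to be non-vacuous, is already supplied by Proposition~\ref{notation:prop:y_varkappa} under the standing boundedness (and Lipschitz) hypotheses. The only thing to keep an eye on is that the penalty in $\phi_\varkappa$ is genuinely a function of the Euclidean representative $v\in\rd$, so that the minimality inequality transfers verbatim to the shifted competitor $b+w$; once that is noted, the rest is mechanical.
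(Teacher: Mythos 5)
Your argument is correct and is exactly the paper's proof: test the global minimality of $b_\varkappa[x]$ in the definition of $\phi_\varkappa(x)$ against the competitor $b_\varkappa[x]+w$, expand the square, and read off the proximal subgradient inequality with $\sigma=\tfrac{1}{2\varkappa^2}$. The bookkeeping remark about the penalty being a function of the Euclidean representative $v\in\rd$ is a fine (if implicit in the paper) observation, but the route is the same.
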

	\begin{proof}
		Let $w\in\rd$. By the definition of the Moreau-Yosida transform,
		\[\phi(x+b_\varkappa[x]+w)+ \frac{1}{2\varkappa}\big|b_\varkappa[x]+w\big|^2\geq \phi(x+b_\varkappa[x])+\frac{1}{2\varkappa}\big|b_\varkappa[x]\big|^2.\] Rearranging,
		\[\begin{split}
			\phi(x+b_\varkappa[x]+w)-\phi(x+b_\varkappa[x])&\geq \frac{1}{2\varkappa}\big|b_\varkappa[x]\big|^2-\frac{1}{2\varkappa}\big|b_\varkappa[x]+w\big|^2\\ &=
			\frac{1}{\varkappa}(-b_\varkappa[x])^\top w-\frac{1}{2\varkappa}|w|^2.\end{split}\] Thus, the proposition holds.
	\end{proof}
	
	\begin{corollary}\label{notation:corollary:bound_p} If $\phi$ is Lipschitz continuous with constant $K$, then, for all $\varkappa>0$ and $x\in \td$, 
		\[\big|p_\varkappa[x]\big|\leq K.\]
	\end{corollary}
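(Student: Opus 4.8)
The plan is to derive the bound from the proximal subgradient inequality itself rather than from the size estimates on $b_\varkappa[x]$ in Proposition~\ref{notation:prop:y_varkappa}. Indeed, those estimates only give $|p_\varkappa[x]|=|b_\varkappa[x]|/\varkappa^2\leq C_1\varkappa^{-1/2}$, which blows up as $\varkappa\downarrow 0$, so they are useless here; the right mechanism is the defining inequality of the proximal subgradient combined with a scaling argument.

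Concretely, I would start from the inequality produced inside the proof of Proposition~\ref{notation:prop:prox_diff}, which in fact holds for \emph{every} $w\in\rd$:
\[\phi\big(x+b_\varkappa[x]+w\big)-\phi\big(x+b_\varkappa[x]\big)\geq p_\varkappa[x]w-\frac{1}{2\varkappa^2}|w|^2.\]
First I would bound the left-hand side using the $K$-Lipschitz continuity of $\phi$ together with $\rho_{\td}\big(x+b_\varkappa[x]+w,\,x+b_\varkappa[x]\big)\leq|w|$, obtaining $p_\varkappa[x]w\leq K|w|+\tfrac{1}{2\varkappa^2}|w|^2$ for all $w$. Then I would substitute $tw$ for $w$ with $t>0$, divide by $t$, and let $t\downarrow 0$; the quadratic term disappears and I am left with $p_\varkappa[x]w\leq K|w|$ for every $w\in\rd$. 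Taking the supremum over unit vectors $w$ then yields $\big|p_\varkappa[x]\big|\leq K$.

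Alternatively, one can phrase this more abstractly: by Proposition~\ref{notation:prop:prox_diff} and the inclusion $\partial_P^-\phi(\cdot)\subset\partial_D^-\phi(\cdot)$ from~\eqref{notation:intro:diffs}, the vector $p_\varkappa[x]$ is a Hadamard subgradient of $\phi$ at $x+b_\varkappa[x]$, and every Hadamard subgradient of a $K$-Lipschitz function has norm at most $K$ because the $\liminf$ in Definition~\ref{notation:def:subsuper_diff} is dominated by $K|w|$. There is no genuine obstacle in this corollary; the only subtlety worth flagging is the observation just made that the a priori bounds on $|b_\varkappa[x]|$ are the wrong tool, and that the estimate must instead be extracted from the subgradient inequality via the homogeneity/scaling trick that cancels the $\sigma|w|^2$ term.
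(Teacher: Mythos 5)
Your proof is correct. The paper disposes of this corollary in one line: it invokes Proposition~\ref{notation:prop:prox_diff} together with the general fact, cited from \cite{Clarke1998a}, that subdifferentials of a $K$-Lipschitz function are bounded in norm by $K$ --- which is exactly the abstract alternative you sketch in your last paragraph (via the inclusion $\partial_P^-\subset\partial_D^-$ and the $\liminf$ in Definition~\ref{notation:def:subsuper_diff} being dominated by $K|w|$). Your primary argument is a self-contained version of that cited fact: you use the global inequality $\phi(x+b_\varkappa[x]+w)-\phi(x+b_\varkappa[x])\geq p_\varkappa[x]w-\tfrac{1}{2\varkappa^2}|w|^2$, bound the left side by $K|w|$ via $\rho_{\td}(y+w,y)\leq|w|$, and kill the quadratic term by the substitution $w\mapsto tw$, $t\downarrow 0$. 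This buys you independence from the external reference at the cost of a few lines, and your preliminary observation is also apt: the bound $|b_\varkappa[x]|\leq C_1\varkappa^{3/2}$ from Proposition~\ref{notation:prop:y_varkappa} would only give $|p_\varkappa[x]|\leq C_1\varkappa^{-1/2}$, which is indeed useless for the uniform-in-$\varkappa$ estimate needed later. No gaps.
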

	This follows from Proposition~\ref{notation:prop:prox_diff} and the fact that subdifferentials of Lipschitz functions are bounded by the Lipschitz constant \cite[Proposition 1.5, Theorem 6.1]{Clarke1998a}.
	
	Finally, we establish a Taylor-like expansion, analogous to \cite[Lemma 3.5]{Clarke1999}.
	\begin{proposition}\label{notation:prop:taylor}
		For all $x\in\td$ and $v\in\rd$,
		\[\phi_\varkappa(x+w)\leq \phi_\varkappa(x)+p_\varkappa[x] w+\frac{1}{2\varkappa}|w|^2,\] where $p_\varkappa[x]$ is defined in Proposition~\ref{notation:prop:prox_diff}.
	\end{proposition}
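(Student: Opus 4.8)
The plan is to exploit the variational definition of the Moreau--Yosida transform directly: to bound $\phi_\varkappa(x+w)$ from above it suffices to test the infimum in Definition~\ref{notation:intrd:IM} against one well-chosen competitor, and the natural choice is the one that routes the shifted base point $x+w$ back to the minimizer $x+b_\varkappa[x]$ associated with $x$. Concretely, in the infimum defining $\phi_\varkappa(x+w)$ I would take the shift $v=b_\varkappa[x]-w$, so that $x+w+v=x+b_\varkappa[x]$.

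This gives immediately
\[\phi_\varkappa(x+w)\leq \phi\big(x+b_\varkappa[x]\big)+\frac{1}{2\varkappa^2}\big|b_\varkappa[x]-w\big|^2.\]
The remaining step is to complete the square in the last term. Expanding it, and keeping in mind that $b_\varkappa[x]$ and $w$ are column vectors of $\rd$,
\[\frac{1}{2\varkappa^2}\big|b_\varkappa[x]-w\big|^2 =\frac{1}{2\varkappa^2}\big|b_\varkappa[x]\big|^2-\frac{1}{\varkappa^2}\big(b_\varkappa[x]\big)^\top w+\frac{1}{2\varkappa^2}|w|^2.\]
Now I would invoke \eqref{notation:intro:y_varkapp} to recognize $\phi(x+b_\varkappa[x])+\frac{1}{2\varkappa^2}|b_\varkappa[x]|^2=\phi_\varkappa(x)$, and \eqref{notation:intro:p_varkappa} to recognize $-\frac{1}{\varkappa^2}(b_\varkappa[x])^\top=p_\varkappa[x]$. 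Substituting both into the inequality above yields exactly $\phi_\varkappa(x+w)\leq\phi_\varkappa(x)+p_\varkappa[x]w+\frac{1}{2\varkappa^2}|w|^2$, which is the assertion.

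I do not expect any real obstacle here: the only points needing a little care are that a minimizing vector $b_\varkappa[x]$ exists at all (guaranteed by Proposition~\ref{notation:prop:y_varkappa}, since $\phi$ is Lipschitz, hence bounded, in our setting) and the row/column bookkeeping so that the cross term of the expanded square is correctly identified with $p_\varkappa[x]\,w$. Note that the argument uses no smallness of $w$, so the estimate is global in $w$, and it requires no regularity of $\phi$ beyond the mere existence of $b_\varkappa[x]$.
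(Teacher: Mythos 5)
Your proposal is correct and coincides with the paper's own argument: the paper likewise tests the infimum defining $\phi_\varkappa(x+w)$ with the shift $b_\varkappa[x]-w$, expands the square, and identifies the terms via \eqref{notation:intro:y_varkapp} and \eqref{notation:intro:p_varkappa}. Your added remarks on existence of $b_\varkappa[x]$ and the row/column bookkeeping are fine but not needed beyond what the paper already assumes.
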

	\begin{proof}
		By the definition of $\phi_\varkappa$, one has that, for all $w\in\rd$,
		\[\phi_\varkappa(x+w)\leq \phi(x+w+(b_\varkappa[x]-w))+\frac{1}{2\varkappa}\big|b_\varkappa[x]-w\big|^2.\] Expanding the last term and using the definitions of $b_\varkappa[x]$ and $p_\varkappa[x]$, we obtain the required inequality.
	\end{proof}
	
	\section{Feedback strategies and upper estimate}\label{sect:feedback}
	We assume that the Lagrangian $L$ is continuous and satisfies the superlinear growth condition:
	\begin{equation}\label{feedback:cond:L}
		\lim_{|v|\rightarrow\infty} \inf_{x\in\td}\frac{L(x,v)}{|v|}=+\infty.
	\end{equation}
	Then, the Hamiltonian $H$ defined by~\eqref{intrd:intro:H} is continuous and satisfies superlinear growth condition~\eqref{notation:cond:H_coercitivity}.
	
	For $\delta>0$ and $A\geq 0$, let
	\begin{equation}\label{feedback:intro:omega}\omega_A(\delta)\triangleq \sup\Big\{|L(x,v)-L(y,v)|:\, x,y\in\td,\,\rho_{\td}(x,y)\leq \delta,\, v\in\rd,\, |v|\leq A \Big\}.\end{equation} For fixed $A$, the function $\omega_A(\cdot)$ is a modulus of continuity of $L$ with respect to the first variable on $\td\times \mathbbm{B}_A$.
	
	Within this section, we assume that we are given with a supersolution to~\eqref{intrd:eq:HJ} that is a pair $(\varphi,\overline{H})$; moreover the function $\varphi$ is Lipschitz continuous.
	
	For $\varkappa>0$, let us consider the Moreau-Yosida transform. For each $x\in \td$, we choose
	\begin{equation}\label{feedback:intro:v_varkappa}\mathbbm{v}_\varkappa[x]\in \underset{v\in \rd}{\operatorname{Argmax}}\Big[-p_\varkappa[x]v-L(x+b_\varkappa[x],v)\Big],\end{equation} where $b_\varkappa[x]$ and $p_\varkappa[x]$ are defined by~\eqref{notation:intro:y_varkapp} and~\eqref{notation:intro:p_varkappa} respectively.
	
	\begin{proposition}\label{feedback:prop:bound} There exists a constant $C_3$ such that, for all $\varkappa>0$ and $x\in\td$,
		\[\big|\mathbbm{v}_\varkappa[x]\big|\leq C_3.\]
	\end{proposition}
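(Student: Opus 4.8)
The plan is to exploit the superlinear growth condition~\eqref{feedback:cond:L} together with the uniform bound $\big|p_\varkappa[x]\big|\leq K$ furnished by Corollary~\ref{notation:corollary:bound_p}, where $K$ is a Lipschitz constant of $\varphi$. First I would record that, by the very definition of $\mathbbm{v}_\varkappa[x]$ in~\eqref{feedback:intro:v_varkappa} and of the Hamiltonian in~\eqref{intrd:intro:H},
\[
-p_\varkappa[x]\,\mathbbm{v}_\varkappa[x]-L\big(x+b_\varkappa[x],\mathbbm{v}_\varkappa[x]\big)=H\big(x+b_\varkappa[x],-p_\varkappa[x]\big)\geq -L\big(x+b_\varkappa[x],0\big)\geq -M_0,
\]
where $M_0\triangleq\max_{y\in\td}|L(y,0)|$ is finite since $L$ is continuous and $\td$ is compact; the first inequality is just the lower bound obtained by plugging $v=0$ into the maximized expression.

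Next I would use~\eqref{feedback:cond:L} to select a radius $R>0$, depending only on $K$, on $M_0$, and on $L$, such that $R>M_0$ and $L(y,v)\geq (K+1)|v|$ for every $y\in\td$ and every $v\in\rd$ with $|v|\geq R$. Then, for any such $v$, using $\big|p_\varkappa[x]\big|\leq K$,
\[
-p_\varkappa[x]v-L\big(x+b_\varkappa[x],v\big)\leq K|v|-(K+1)|v|=-|v|\leq -R<-M_0 .
\]
Comparing this with the lower bound from the previous step shows that no $v$ with $|v|\geq R$ can realize the maximum in~\eqref{feedback:intro:v_varkappa}; hence $\big|\mathbbm{v}_\varkappa[x]\big|<R$, and one may take $C_3\triangleq R$. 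The same comparison incidentally shows that the $\operatorname{Argmax}$ in~\eqref{feedback:intro:v_varkappa} is nonempty: on the compact ball $\mathbbm{B}_R$ the continuous map $v\mapsto -p_\varkappa[x]v-L(x+b_\varkappa[x],v)$ attains a maximum, which by the estimate above is the global one.

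I do not expect a genuine obstacle; the only delicate point is that every bound be uniform in both $x$ and $\varkappa$. This is automatic here: $M_0$ and the radius $R$ extracted from the superlinear growth condition carry no dependence on $x$, while the dependence on $\varkappa$ enters solely through $p_\varkappa[x]$, which Corollary~\ref{notation:corollary:bound_p} controls by $K$ independently of $\varkappa$. Consequently $C_3$ depends only on $L$ and on the Lipschitz constant of $\varphi$, as claimed.
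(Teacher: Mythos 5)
Your proof is correct and follows essentially the same route as the paper: bound the maximal value from below by $-\sup_{y\in\td}|L(y,0)|$ via $v=0$, use $|p_\varkappa[x]|\leq K$ from Corollary~\ref{notation:corollary:bound_p}, and invoke the superlinear growth of $L$ to exclude maximizers outside a fixed ball, uniformly in $x$ and $\varkappa$. The only cosmetic difference is your choice of threshold ($R>M_0$ with slope $K+1$ versus the paper's slope $K+C_1'+1$), plus your added remark on nonemptiness of the $\operatorname{Argmax}$, neither of which changes the argument.
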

	\begin{proof}
		The value 
		\[\max_{v\in\rd}\Big[-p_\varkappa[x]v-L(x+b_\varkappa[x],v)\Big]\] is bounded below by $-C_1'$, where \[C_1'\triangleq  \sup_{y\in\td}|L(y,0)|.\]  Corollary~\ref{notation:corollary:bound_p} says that $\big|p_\varkappa[x]\big|\leq K$, where $K$ is the Lipschitz constant for $\varphi$. Due to the superlinearity of $L$, there exists $c\geq 0$ such that
		\[L(x,v)\geq (K+C_1'+1)|v|\text{ whenever }|v|\geq c.\] Thus, for $|v|\geq c\vee 1$,
		\[\begin{split}
			-p_\varkappa[x]v-L(x+b_\varkappa[x],v)&\leq K|v|-(K+C_1'+1)|v|\\&=-(C_1'+1)|v|<-C_1'.\end{split}\] Setting $C_3\triangleq c\vee 1$ completes the proof.
	\end{proof}
	
	The strategy $\mathbbm{v}_\varkappa$ may be discontinuous. We employ the step-by-step scheme proposed by  Krasovskii and  Subbotin (see \cite{NN_PDG}).
	Let 
	\begin{itemize}
	\item	$r$ be a length of the time interval;
	\item $\Delta=\{t_i\}_{i=1}^n$ be a partition of $[0,r]$; 
	\item $d(\Delta)$ denote the fineness of $\Delta$, i.e.,
	\[d(\Delta)\triangleq \max_{i=0,\ldots,n-1}(t_{i+1}-t_i);\] 
	\item  $y\in \td$. \end{itemize} We define the motion $x_{y,\varkappa,r,\Delta}(\cdot)$ and control $v_{y,\varkappa,r,\Delta}(\cdot)$ sequentially on each time interval $[t_i,t_{i+1}]$ by the rule:
	\begin{itemize}
		\item $x_{y,\varkappa,r,\Delta}(0)\triangleq y$;
		\item if the motion is defined up to $t_i$, then, for $t\in [t_i,t_{i+1})$,
		\[v_{y,\varkappa,r,\Delta}(t)\triangleq \mathbbm{v}_\varkappa[x_{y,\varkappa,r,\Delta}(t_i)],\] and, for $t\in [t_i,t_{i+1}]$,
		\[x_{y,\varkappa,r,\Delta}(t)\triangleq x_{y,\varkappa,r,\Delta}(t_i)+(t-t_i)\mathbbm{v}_\varkappa[x_{y,\varkappa,r,\Delta}(t_i)].\]
	\end{itemize}
	
	\begin{theorem}\label{feedback:th:main} Let $(\varphi,\overline{H})$, where $\varphi:\td\rightarrow\mathbb{R}$ is Lipschitz continuous, be a supersolution to~\eqref{intrd:eq:HJ}. Given $\varepsilon>0$, there exist $\varkappa_0>0$ and a function $\delta:(0,\varkappa_0]\rightarrow (0,+\infty)$ such that, for all $r>0$, $\varkappa\in (0,\varkappa_0]$, partitions $\Delta$ of $[0,r]$ with $d(\Delta)\leq\delta(\varkappa)$, and each $y\in\td$, the following inequality holds true:
		\[\varphi(x_{y,\varkappa,r,\Delta}(r))+\int_{0}^rL(x_{y,\varkappa,r,\Delta}(t),v_{y,\varkappa,r,\Delta}(t))dt\leq \varphi(y)- \overline{H}r+(r+1)\varepsilon.\]
	\end{theorem}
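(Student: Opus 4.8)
The plan is to track the Moreau--Yosida transform $\varphi_\varkappa$ of $\varphi$ along the step-by-step motion, obtain a one-step increment estimate from Proposition~\ref{notation:prop:taylor}, telescope over the partition, and finally pass back from $\varphi_\varkappa$ to $\varphi$ via Corollary~\ref{notation:corollary:vicinity}. Fix $y$, $\varkappa$, $r$, $\Delta$, abbreviate $x(\cdot)=x_{y,\varkappa,r,\Delta}(\cdot)$, $v(\cdot)=v_{y,\varkappa,r,\Delta}(\cdot)$, and on a step $[t_i,t_{i+1}]$ write $x_i=x(t_i)$, $\tau_i=t_{i+1}-t_i$, $b_i=b_\varkappa[x_i]$, $p_i=p_\varkappa[x_i]$, $v_i=\mathbbm{v}_\varkappa[x_i]$, so that $x(t)=x_i+(t-t_i)v_i$ on $[t_i,t_{i+1}]$ and $x_{i+1}=x_i+\tau_iv_i$. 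First I would apply Proposition~\ref{notation:prop:taylor} with $w=\tau_iv_i$ to get $\varphi_\varkappa(x_{i+1})-\varphi_\varkappa(x_i)\le\tau_ip_iv_i+\tfrac{\tau_i^2}{2\varkappa^2}|v_i|^2$, where $|v_i|\le C_3$ by Proposition~\ref{feedback:prop:bound}.

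The two structural facts behind proximal aiming enter next. By Proposition~\ref{notation:prop:prox_diff} and inclusion~\eqref{notation:intro:diffs}, $p_i\in\partial_D^-\varphi(x_i+b_i)$, so the supersolution property gives $H(x_i+b_i,-p_i)\ge\overline{H}$; and by the choice~\eqref{feedback:intro:v_varkappa} of $v_i$ together with~\eqref{intrd:intro:H}, $-p_iv_i-L(x_i+b_i,v_i)=H(x_i+b_i,-p_i)\ge\overline{H}$, hence $p_iv_i\le-\overline{H}-L(x_i+b_i,v_i)$. Substituting,
\[\varphi_\varkappa(x_{i+1})-\varphi_\varkappa(x_i)\le-\tau_i\overline{H}-\tau_iL(x_i+b_i,v_i)+\frac{\tau_i^2}{2\varkappa^2}C_3^2.\]
To replace $L(x_i+b_i,v_i)$ by the genuine running cost, note that $\rho_{\td}(x(t),x_i+b_i)\le(t-t_i)|v_i|+|b_i|\le d(\Delta)C_3+C_1\varkappa^{3/2}$ (Proposition~\ref{notation:prop:y_varkappa}) and $|v(t)|\le C_3$ on $[t_i,t_{i+1})$, so by the definition~\eqref{feedback:intro:omega} of $\omega_{C_3}$,
\[\int_{t_i}^{t_{i+1}}L(x(t),v(t))\,dt\le\tau_iL(x_i+b_i,v_i)+\tau_i\,\omega_{C_3}\!\big(d(\Delta)C_3+C_1\varkappa^{3/2}\big).\]
Adding the last two inequalities, summing over $i=0,\dots,n-1$ so that the $\varphi_\varkappa$-terms telescope, and using $\sum_i\tau_i=r$ and $\sum_i\tau_i^2\le d(\Delta)r$, I obtain
\[\varphi_\varkappa(x(r))-\varphi_\varkappa(y)+\int_0^rL(x(t),v(t))\,dt\le-\overline{H}r+\frac{d(\Delta)C_3^2}{2\varkappa^2}\,r+\omega_{C_3}\!\big(d(\Delta)C_3+C_1\varkappa^{3/2}\big)\,r,\]
and Corollary~\ref{notation:corollary:vicinity} upgrades this to the same inequality with $\varphi$ in place of $\varphi_\varkappa$ at the cost of an extra additive $2C_2\varkappa$.

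It remains to choose the parameters. Since $L$ is uniformly continuous on the compact set $\td\times\mathbbm{B}_{C_3}$, we have $\omega_{C_3}(s)\downarrow0$ as $s\downarrow0$; given $\varepsilon>0$ I would pick $\eta>0$ with $\omega_{C_3}(s)\le\varepsilon/2$ for $s\le\eta$, then $\varkappa_0>0$ so small that $2C_2\varkappa_0\le\varepsilon$ and $C_1\varkappa_0^{3/2}\le\eta/2$, and for $\varkappa\in(0,\varkappa_0]$ set $\delta(\varkappa)\triangleq\min\{\varepsilon\varkappa^2/C_3^2,\ \eta/(2C_3)\}$. Then for every $r>0$, every $\varkappa\le\varkappa_0$, every $\Delta$ with $d(\Delta)\le\delta(\varkappa)$ and every $y$, the first error term is at most $r\varepsilon/2$, the argument of $\omega_{C_3}$ does not exceed $\eta$ so the second is at most $r\varepsilon/2$, and the transform-correction is at most $\varepsilon$, giving the bound $(r+1)\varepsilon$. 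The conceptual core — that the steering subgradient $p_i$ sits at the auxiliary point $x_i+b_i$, which is why evaluating $H$ and $L$ there is the right thing to do — is already packaged in Proposition~\ref{notation:prop:prox_diff}, so the genuine work is the uniform (in $r$ and $y$) bookkeeping of the three error sources; the main technical point I expect is keeping the mesh tied to $\varkappa$ through $\delta(\varkappa)$ so that the potentially dangerous term $d(\Delta)/\varkappa^2$ stays controlled while the total error still grows only linearly in $r$.
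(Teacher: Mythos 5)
Your proposal is correct and takes essentially the same route as the paper's proof: the one-step estimate from Proposition~\ref{notation:prop:taylor}, the combination of $p_i\in\partial_P^-\varphi(x_i+b_i)\subset\partial_D^-\varphi(x_i+b_i)$ with the supersolution property and the argmax choice of $\mathbbm{v}_\varkappa$ to get $p_iv_i+L(x_i+b_i,v_i)\leq-\overline{H}$, the modulus-of-continuity correction for the running cost, telescoping, and the return from $\varphi_\varkappa$ to $\varphi$ with a $\varkappa$-then-$\delta(\varkappa)$ parameter choice. The only deviations (a single modulus applied to the summed distance instead of two moduli, and a $2C_2\varkappa$ correction where the paper uses $C_2\varkappa$ together with $\varphi_\varkappa\leq\varphi$) are immaterial.
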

	\begin{proof} For brevity, let:
		\begin{itemize}
			\item $x_i\triangleq x_{y,\varkappa,r,\Delta}(t_i)$;
			\item $v_i\triangleq v_{y,\varkappa,r,\Delta}(t_i)$;
			\item $p_i\triangleq p_\varkappa[x_i]$;
			\item $y_i\triangleq x_i+ b_\varkappa[x_i]$.
		\end{itemize} By Proposition~\ref{feedback:prop:bound}, $|v_i|\leq C_3$.
		
		For each $i=0,\ldots,n-1$, let us consider the difference
		\[\varphi_\varkappa(x_{i+1})-\varphi_\varkappa(x_i).\] Note that $x_{i+1}=x_i+(t_{i+1}-t_i)v_i$. Due to Proposition~\ref{notation:prop:taylor},
		\begin{equation}\label{feedback:ineq:varphi_varkappa}\varphi_\varkappa(x_{i+1})-\varphi_\varkappa(x_i)\leq (t_{i+1}-t_i)p_iv_i+\frac{(t_{i+1}-t_i)^2}{2\varkappa}C_3^2.\end{equation} Next, consider the integral
		\[\int_{t_i}^{t_{i+1}}L\big(x_{y,\varkappa,r,\Delta}(t),v_{y,\varkappa,r,\Delta}(t)\big)dt=\int_{t_i}^{t_{i+1}}L\big(x_i+v_it,v_i\big)dt.\] Using the modulus of continuity $\omega_{C_3}(\cdot)$, we deduce
		\[\Big|L\big(x_i+v_it,v_i\big)-L(y_i,v_i)\Big|\leq\omega_{C_3}(\rho_{\td}(x_i+v_it,x_i))+\omega_{C_3}(\rho_{\td}(x_i,y_i)).\] The first term is bounded by $\omega_{C_3}(d(\Delta)C_3)$, while the second, thanks to Proposition~\ref{notation:prop:y_varkappa}, is bounded by $\omega_{C_3}(C_1\varkappa)$. Thus,
		\[\begin{split}
			\Bigg|\int_{t_i}^{t_{i+1}}L\big(x_{y,\varkappa,r,\Delta}(t),v_{y,\varkappa,r,\Delta}(t)\big)dt&- (t_{i+1}-t_i)L(y_i,v_i)\Bigg|\\\leq (t_{i+1}-t_i)&[\omega_{C_3}(d(\Delta)C_3)+\omega_{C_3}(C_1\varkappa)].\end{split}\] Combining this with inequality~\eqref{feedback:ineq:varphi_varkappa}, we arrive at the estimate:
		\[
		\begin{split}
			\varphi_\varkappa(x_{i+1})-\varphi_\varkappa&(x_i)+\int_{t_i}^{t_{i+1}}L\big(x_{y,\varkappa,r,\Delta}(t),v_{y,\varkappa,r,\Delta}(t)\big)dt \\\leq (t_{i+1}&-t_i)\Big[p_iv_i+L(y_i,v_i)\Big]\\&+\frac{(t_{i+1}-t_i)^2}{2\varkappa}C_3^2+(t_{i+1}-t_i)\Big[\omega_{C_3}(d(\Delta)C_3)+\omega_{C_3}(C_1\varkappa)\Big].
		\end{split}
		\] By Proposition~\ref{notation:prop:prox_diff} and inclusion~\eqref{notation:intro:diffs}, $p_i\in \partial_P^-\varphi(y_i)\subset \partial_D^-\varphi(y_i)$. Since $(\varphi,\overline{H})$ is a supersolution to~\eqref{intrd:eq:HJ}, whereas $v_i$ satisfies
		\[p_iv_i+L(y_i,v_i)=\min_{v\in\rd}\Big[p_iv+L(y_i,v)\Big],\] we have that
		\[p_iv_i+L(y_i,v_i)\leq -\overline{H}.\] Hence,
		\[
		\begin{split}
			\varphi_\varkappa(x_{i+1})-\varphi_\varkappa(x_{i})+	\int_{t_i}^{t_{i+1}}L\big(x_{y,\varkappa,r,\Delta}(t),v_{y,\varkappa,r,\Delta}(t)\big)&dt  \leq -(t_{i+1}-t_i)\overline{H}+  \\+\frac{(t_{i+1}-t_i)^2}{2\varkappa}C_3^2+(t_{i+1}-t_i)\Big[\omega_{C_3}&(d(\Delta)C_3)+\omega_{C_3}(C_1\varkappa)\Big].
		\end{split}
		\] Summing over $i=0,\ldots,n-1$ yields
		\[
		\begin{split}
			\varphi_\varkappa(x_{n})+	\int_{0}^{r}L\big(x_{y,\varkappa,r,\Delta}(&t),v_{y,\varkappa,r,\Delta}(t)\big)dt \leq -r\overline{H}+ \varphi_\varkappa(y) \\&+\frac{rd(\Delta)}{2\varkappa}C_3^2+r\Big[\omega_{C_3}(d(\Delta)C_3)+\omega_{C_3}(C_1\varkappa)\Big].
		\end{split}
		\] Using Corollary~\ref{notation:corollary:vicinity} and the inequality $\varphi_\varkappa(y)\leq \varphi(y)$, we conclude that
		\[
		\begin{split}
			\varphi(x_{n})+	\int_{0}^{r}L\big(x_{y,\varkappa,r,\Delta}(&t),v_{y,\varkappa,r,\Delta}(t)\big)dt \leq -r\overline{H}+ \varphi(y) \\&+\frac{rd(\Delta)}{2\varkappa}C_3^2+r\omega_{C_3}(d(\Delta)C_3)+r\omega_{C_3}(C_1\varkappa)+C_2\varkappa.
		\end{split}
		\] 
		Now, we choose $\varkappa_0$ such that
		\[C_2\varkappa_0\leq \varepsilon,\, \omega_{C_3}(C_1\varkappa_0)\leq \varepsilon/2.\] For fixed $\varkappa>0$, let $\delta(\varkappa)$ be such that
		\[\delta(\varkappa)\leq \varepsilon\varkappa/(2C_3^2),\, \omega_{C_3}(\delta(\varkappa)C_3)\leq \varepsilon/4.\] Thus the statement of the theorem follows.
	\end{proof}
	
	\section{Lower estimate}\label{sect:lower_bound}
	\begin{theorem}\label{lower:th:main} Let $(\varphi,\overline{H})$ be a subsolution to~\eqref{intrd:eq:HJ}. Additionally, assume that  $\varphi:\td\rightarrow\mathbb{R}$ is Lipschitz continuous.  Then, for every $r>0$ and each controlled process $(x(\cdot),v(\cdot))$ on $[0,r]$,
		\[\varphi(x(r))+\int_0^rL(x(t),v(t))dt\geq \varphi(x(0))-\overline{H}r.\]
	\end{theorem}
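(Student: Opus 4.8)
The plan is to run the classical mollification argument for Hamilton--Jacobi subsolutions; it sidesteps the difficulty that, $\varphi$ being nonsmooth, one cannot differentiate $t\mapsto\varphi(x(t))$ along a fixed trajectory by the naive chain rule. \emph{First}, I would record an almost-everywhere form of the subsolution property. Being Lipschitz, $\varphi$ is differentiable at a.e.\ $x\in\td$ by Rademacher's theorem, and at any such point $\nabla\varphi(x)\in\partial_D^-\varphi(x)\cap\partial_D^+\varphi(x)$; hence, since $(\varphi,\overline{H})$ is a subsolution, $H(x,-\nabla\varphi(x))\leq\overline{H}$. Writing $H$ as the Legendre transform~\eqref{intrd:intro:H}, this is equivalent to
\[L(x,v)+\nabla\varphi(x)v\geq-\overline{H}\qquad\text{for all }v\in\rd,\]
and this holds for a.e.\ $x\in\td$.

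\emph{Next}, I would mollify. Fix a smooth mollifier and let $\eta_\varepsilon$ be its rescaling, supported in $\mathbbm{B}_\varepsilon$, and set $\varphi^\varepsilon\triangleq\varphi\ast\eta_\varepsilon$ (viewing $\varphi$ as a $\mathbb{Z}^d$-periodic function on $\rd$). Then $\varphi^\varepsilon\in C^\infty(\td)$, it is Lipschitz with the same constant $K$ as $\varphi$, $\varphi^\varepsilon\to\varphi$ uniformly as $\varepsilon\downarrow 0$, and $\nabla\varphi^\varepsilon=(\nabla\varphi)\ast\eta_\varepsilon$, so $|\nabla\varphi^\varepsilon|\leq K$. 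Averaging the displayed inequality over the translates $x-z$, $z\in\mathbbm{B}_\varepsilon$, and invoking the modulus of continuity~\eqref{feedback:intro:omega}, one obtains, for every $x\in\td$ and $v\in\rd$,
\[-\nabla\varphi^\varepsilon(x)v-L(x,v)\leq\overline{H}+\int\big[L(x-z,v)-L(x,v)\big]\eta_\varepsilon(z)\,dz\leq\overline{H}+\omega_{|v|}(\varepsilon).\]
Because $|\nabla\varphi^\varepsilon(x)|\leq K$, the superlinearity~\eqref{feedback:cond:L} of $L$ guarantees, exactly as in the proof of Proposition~\ref{feedback:prop:bound}, that the maximum defining $H(x,-\nabla\varphi^\varepsilon(x))$ is attained on $\mathbbm{B}_{C_3}$, whence
\[H(x,-\nabla\varphi^\varepsilon(x))\leq\overline{H}+\omega_{C_3}(\varepsilon)\qquad\text{for all }x\in\td.\]

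\emph{Then} I would integrate along the trajectory and pass to the limit. For a controlled process $(x(\cdot),v(\cdot))$ on $[0,r]$, the map $t\mapsto\varphi^\varepsilon(x(t))$ is absolutely continuous (since $\varphi^\varepsilon$ is $C^1$ and $x(\cdot)$ is absolutely continuous) with $\frac{d}{dt}\varphi^\varepsilon(x(t))=\nabla\varphi^\varepsilon(x(t))v(t)$ for a.e.\ $t$. By the definition of $H$ and the last display,
\[\nabla\varphi^\varepsilon(x(t))v(t)\geq-L(x(t),v(t))-H(x(t),-\nabla\varphi^\varepsilon(x(t)))\geq-L(x(t),v(t))-\overline{H}-\omega_{C_3}(\varepsilon),\]
so, integrating over $[0,r]$,
\[\varphi^\varepsilon(x(r))-\varphi^\varepsilon(x(0))\geq-\int_0^rL(x(t),v(t))\,dt-\overline{H}r-r\,\omega_{C_3}(\varepsilon).\]
Letting $\varepsilon\downarrow 0$, using $\varphi^\varepsilon\to\varphi$ uniformly and $\omega_{C_3}(\varepsilon)\to 0$ (uniform continuity of $L$ on $\td\times\mathbbm{B}_{C_3}$), yields $\varphi(x(r))+\int_0^rL(x(t),v(t))\,dt\geq\varphi(x(0))-\overline{H}r$.

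The main obstacle is precisely the one the mollification is designed to defeat: a direct chain rule $\frac{d}{dt}\varphi(x(t))=\nabla\varphi(x(t))v(t)$ is not available, since the trajectory may spend a set of positive time on the Lebesgue-null set where $\varphi$ fails to be differentiable. The price paid is the estimate of the second step --- converting the almost-everywhere (viscosity) subsolution inequality into a genuinely pointwise bound for $\varphi^\varepsilon$, with an error controlled \emph{uniformly} in $x$; that uniformity is exactly what the a priori velocity bound $C_3$ of Proposition~\ref{feedback:prop:bound} supplies, by confining the relevant maximum over $v$ to the fixed ball $\mathbbm{B}_{C_3}$.
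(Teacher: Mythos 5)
Your proposal is correct, but it follows a genuinely different route from the paper's proof of Theorem~\ref{lower:th:main}. You mollify: Rademacher plus the inclusion of $\nabla\varphi(x)$ in $\partial_D^+\varphi(x)$ at points of differentiability gives the pointwise inequality $L(x,v)+\nabla\varphi(x)v\geq-\overline{H}$ a.e., averaging against $\eta_\varepsilon$ turns $\varphi^\varepsilon$ into a smooth approximate subsolution with error $\omega_{C_3}(\varepsilon)$ (the uniformity coming, as you note, from confining the Legendre maximizer to $\mathbbm{B}_{C_3}$ via $|\nabla\varphi^\varepsilon|\leq K$ and superlinearity), and then the classical chain rule along the absolutely continuous trajectory finishes the job; large values of $|v(t)|$ cause no trouble because the Fenchel inequality $\nabla\varphi^\varepsilon(x)v\geq-L(x,v)-H(x,-\nabla\varphi^\varepsilon(x))$ holds for every $v$. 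The paper instead proves the theorem directly for the merely Lipschitz $\varphi$ using the upper Moreau--Yosida (sup-convolution) transform $\varphi^\varkappa$, its proximal supergradients and the Taylor-type inequality~\ref{lower:prop:taylor}, together with a continuation argument over the set $\mathscr{S}_{\varepsilon,\varkappa}$ that requires a case split according to whether the velocity is large in $L^1$ on a subinterval (handled by the coercivity constants $A$, $B$) or controlled (handled by proximal aiming at $\hat{y}=x(s)+b^\varkappa[x(s)]$). It is worth observing that your second step is essentially the paper's Lemma~\ref{mather:lm:subsol}, which the author proves later (for the Mather-measure section) by exactly your Rademacher-plus-convolution argument; so your proof amounts to deriving Theorem~\ref{lower:th:main} from that lemma plus the smooth chain rule, which is shorter and more classical, while the paper's proof keeps the lower estimate inside the same proximal/Moreau--Yosida toolkit used for the upper estimate in Section~\ref{sect:feedback} and avoids invoking a.e.\ differentiability and mollification at that stage. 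Both arguments are valid and give the same conclusion.
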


The proof of this statement relies on the upper Moreau-Yosida transform of a function $\phi:\td\rightarrow\mathbb{R}$ introduced in the following way:
\[\phi^\varkappa(x)\triangleq \sup\Bigg\{\phi(x+v)-\frac{1}{2\varkappa}|v|^2:\, v\in\rd\Bigg\}.\]
Note that $\phi^\varkappa(x)=-(-\phi)_\varkappa(x)$. This give that, if $\phi$ is Lipschitz continuous with constant $K$, and $b^\varkappa[x]$ satisfies the equality:
\[\phi^\varkappa(x)= \phi(x+b^\varkappa[x])-\frac{1}{2\varkappa}\big|b^\varkappa[x]\big|^2,\]
then the following properties hold:
\begin{enumerate}[label=(K\arabic*)]
    \item\label{lower:prop:lip} $\phi^\varkappa$ is Lipschitz continuous with the constant $K$;
\item\label{lower:prop:b_bound} $\big|b^\varkappa[x]\big|\leq C_1\varkappa$;
\item\label{lower:prop:dist} $\|\phi^\varkappa-\phi\|_{C(\td)}\leq C_2\varkappa$;
\item\label{lower:prop:superdiff} $p^\varkappa[x]\triangleq\varkappa^{-2}(b^\varkappa[x])^\top\in\partial_P^+\phi(x+b^\varkappa[x])$;
\item\label{lower:prop:p_bound} $|p^\varkappa[x]|\leq K$;
\item\label{lower:prop:taylor} for all $w\in \rd$, one has the inequality
	\begin{equation}\label{lower:ineq:phi_kappa_sup}
		\phi^\varkappa(x+w)-\phi(x)\geq p^\varkappa[x]w-\frac{1}{2\varkappa}|w|^2.
	\end{equation}
\end{enumerate}

\begin{proof}[Proof of Theorem~\ref{lower:th:main}]
	The proof is divided into five steps for clarity.
	
	\begin{enumerate}[wide, labelwidth=!, labelindent=0pt, label=\textit{Step \arabic*}.]
		\item Let $K$ stand for the Lipschitz constant of the function $\varphi$.  Choose $A$ such that, for all $(x,v)\in \td\times\rd$ satisfying $|v|\geq A$:
		\begin{equation}\label{lower:ineq:A}L(x,v)\geq K|v|-\overline{H}.\end{equation}
		This is possible due to the Lagrangian's superlinearity. Similarly, choose $B$ satisfying
		\begin{equation}\label{lower:ineq:B}
			L(x,v)\geq (K+1)|v|\text{ whenever }|v|\geq B.
		\end{equation}
		Furthermore, recall that $\omega_A(\cdot)$ stands for the modulus of continuity of $L$ with respect to the first variable on $\td\times \mathbbm{B}_A$. Finally, notice that $L(x,v)\geq C_2'$ for some constant. Without loss of generality one can assume that $C_2'\leq 0$.
		
		\item For $\varepsilon>0$, we choose $\varkappa>0$ such that
		\begin{equation}\label{lower:ineq:varkappa_choice}
			C_2\varkappa<\varepsilon/2,\ \ \omega_A(C_1\varkappa)\leq \varepsilon/3.
		\end{equation}
		
		Let us define the set
		\[\mathscr{S}_{\varepsilon,\varkappa}\triangleq \Bigg\{s\in [0,r]:\,\varphi^\varkappa(x(s))+\int_0^sL(x(t),v(t))dt\geq \varphi^\varkappa(x(0))-\big(\overline{H}+\varepsilon\big) s\Bigg\}.\]
		Notice that $\mathscr{S}_{\varepsilon,\varkappa}$ is nonempty. Indeed, $0\in\mathscr{S}_{\varepsilon,\varkappa}$.
		
		\item Let us show that $s_*\triangleq \sup\mathscr{S}_{\varepsilon,\varkappa}$ lies in $\in\mathscr{S}_{\varepsilon,\varkappa}$. Let $\{s_n\}_{n=1}^\infty\subset\mathscr{S}_{\varepsilon,\varkappa}$ be an increasing sequence converging to $s_*$. Then,
		\[\varphi^\varkappa(x(s_n))+\int_0^{s_n}L(x(t),v(t))dt\geq \varphi^\varkappa(x(0))-\big(\overline{H}+\varepsilon\big) s_n.\]
		Since $L(x(t),v(t))\geq C_2'$ and $s_n\leq s_*$, we have that
		\[\varphi^\varkappa(x(s_n))+\int_0^{s_*}L(x(t),v(t))dt\geq \varphi^\varkappa(x(0))-\big(\overline{H}+\varepsilon\big) s_n+C_2'(s_*-s_n).\]
		Taking the limit as $n\rightarrow \infty$ and using the continuity of $\varphi^\varkappa$, we conclude that $s_*\in \mathscr{S}_{\varepsilon,\varkappa}$.
		
		\item For $s\in\mathscr{S}_{\varepsilon,\varkappa}$ with $s<r$, we shall prove that there exists $s_+\in (s,r]$ lying in $\mathscr{S}_{\varepsilon,\varkappa}$. We consider two cases.
		\begin{enumerate}
			\item There exists $\tau\in (0,r-s]$ such that
			\begin{equation}\label{lower:ineq:prenorm}
				\int_s^{s+\tau}\Big(|v(t)|\mathbbm{1}_{|v(t)|\geq B}-K|v(t)|\mathbbm{1}_{|v(t)|<B}\Big)dt\geq \Big(-\overline{H} -C_2'\Big)\tau.
			\end{equation} Here, $C_2'$ is a lower bound of the Lagrangian $L$. Additionally, $C_2'\leq 0$.
			
			In this case, we set $s_+=s+\tau$.  Then,
			\begin{equation}\label{lower:ineq:s_plus_0_full}
				\begin{split}
					\varphi^\varkappa(x(s_+))-\varphi^\varkappa(x(0)&)+\int_{0}^{s_+}L(x(t),v(t))dt\\=
					\varphi^\varkappa(x(s_+)&)-\varphi^\varkappa(x(s))+\int_{s}^{s_+}L(x(t),v(t))dt\\&+\varphi^\varkappa(x(s))-\varphi^\varkappa(x(0))+\int_{0}^{s}L(x(t),v(t))dt.
				\end{split}
			\end{equation}
			Since $s\in\mathscr{S}_{\varepsilon,\varkappa}$, we have that
			\begin{equation}\label{lower:ineq:s_0}
				\varphi^\varkappa(x(s))-\varphi^\varkappa(x(0))+\int_{0}^{s}L(x(t),v(t))dt\geq -\big(\overline{H}+\varepsilon\big) s.
			\end{equation}
			Due to the Lipschitz continuity of the function $\varphi^\varkappa$ (see property~\ref{lower:prop:lip}) and since $x(s_+) = x(s) + \int_{s}^{s_+}v(t)dt$, we derive the inequality
			\begin{equation}\label{lower:ineq:s_s_plus_1}
				\begin{split}
					\varphi^\varkappa(x(s_+)) - \varphi^\varkappa&(x(s)) + \int_{s}^{s_+}L(x(t),v(t))dt \\
					&\geq \int_s^{s_+} \Big(-K|v(t)| + L(x(t),v(t))\Big)dt.
				\end{split}
			\end{equation}
			
			The Lagrangian can be decomposed as:
			\[L(x(t),v(t)) = L(x(t),v(t))\mathbbm{1}_{|v(t)|\geq B} + L(x(t),v(t))\mathbbm{1}_{|v(t)|<B}.\]
			
			From the choice of $B$ (see~\eqref{lower:ineq:B}), we obtain:
			\[L(x(t),v(t))\mathbbm{1}_{|v(t)|\geq B} \geq (K+1)|v(t)|\mathbbm{1}_{|v(t)|\geq B}.\]
			
			Moreover, by the definition of $C_2'$:
			\[L(x(t),v(t))\mathbbm{1}_{|v(t)|<B} \geq C_2'\mathbbm{1}_{|v(t)|<B} \geq C_2'.\]
			
			Substituting these bounds into~\eqref{lower:ineq:s_s_plus_1} gives the inequality
			\begin{equation*}
				\begin{split}
					\varphi^\varkappa(x(&s_+)) - \varphi^\varkappa(x(s)) + \int_{s}^{s_+}L(x(t),v(t))dt \\
					&\geq \int_s^{s_+} \Big(-K|v(t)| + (K+1)|v(t)|\mathbbm{1}_{|v(t)|\geq B}\Big)dt + C_2'(s_+-s).
				\end{split}
			\end{equation*}
			
			This combined with assumption~\eqref{lower:ineq:prenorm} yields
			\[\varphi^\varkappa(x(s_+)) - \varphi^\varkappa(x(s)) + \int_{s}^{s_+}L(x(t),v(t))dt \geq -\overline{H}(s_+-s).\]
			
			Substituted this estimate along with~\eqref{lower:ineq:s_0} into inequality~\eqref{lower:ineq:s_plus_0_full}, we prove the inclusion $s_+\in\mathscr{S}_{\varepsilon,\varkappa}$.
			
			\item No  $\tau$ satisfying~\eqref{lower:ineq:prenorm} exists. Then, for all $\tau\in (0,r-s]$, one has that
			\begin{equation}\label{lower:ineq:norm_estimate}
				\int_s^{s+\tau}|v(t)|dt\leq C_3'\tau,
			\end{equation}
			where $C_3'\triangleq -\overline{H}-C_2'+(K+1)B$.
			Indeed, we can decompose the integral as follows:
			\begin{equation}\label{lower:ineq:norm_L_1_v}\begin{split}
					\int_s^{s+\tau}|v(t)|dt &= \int_s^{s+\tau}|v(t)|\mathbbm{1}_{|v(t)|\geq B}dt + \int_s^{s+\tau}|v(t)|\mathbbm{1}_{|v(t)|<B}dt \\
					&\leq \int_s^{s+\tau}|v(t)|\mathbbm{1}_{|v(t)|\geq B}dt + B\tau.
				\end{split}
			\end{equation}
			
			Since assumption~\eqref{lower:ineq:prenorm} fails to hold, we can bound the first term as follows:
			\[\begin{split}
				\int_s^{s+\tau}|v(t)|\mathbbm{1}_{|v(t)|\geq B}dt &\leq \big(-\overline{H} - C_2'\big)\tau + \int_s^{s+\tau}K|v(t)|\mathbbm{1}_{|v(t)|<B}dt \\
				&\leq \big(-\overline{H} - C_2' + KB\big)\tau.
			\end{split}\]
			
			Combining this with~\eqref{lower:ineq:norm_L_1_v} yields the desired estimate~\eqref{lower:ineq:norm_estimate}.

			We choose $\tau$ to satisfy the following assumptions:
			\begin{equation}\label{lower:ineq:tau_choice}
				\frac{(C_3')^2\tau}{2\varkappa}\leq\varepsilon/3,\ \ \omega_A(C_3'\tau)\leq\varepsilon/3.
			\end{equation}
			Set $s_+=s+\tau$. Let us  introduce the following notation:
			\[\hat{b}\triangleq b^\varkappa[x(s)],\quad \hat{y}\triangleq x(s)+\hat{b},\quad \hat{p}\triangleq p^\varkappa[x(s)].\]
			Recall that $x(s_+) = x(s) + \int_s^{s_+}v(t)dt$.
			
			Applying inequality~\eqref{lower:ineq:phi_kappa_sup} yields
			\[\varphi^\varkappa(x(s_+)) - \varphi^\varkappa(x(s)) \geq \int_s^{s_+}\hat{p} v(t)dt - \frac{1}{2\varkappa}\Bigg|\int_{s}^{s_+}v(t)dt\Bigg|^2.\]
			Consequently,
			\begin{equation}\label{lower:ineq:phi_superdiff}
				\begin{split}	
					\varphi^\varkappa(x(s_+)) - &\varphi^\varkappa(x(s)) + \int_s^{s_+}L(x(t),v(t))dt \\
					\geq \int_s^{s_+}&\Big(\hat{p} v(t) + L(x(t),v(t))\Big)\mathbbm{1}_{|v(t)|< A}dt \\
					&+ \int_s^{s_+}\Big(\hat{p} v(t) + L(x(t),v(t))\Big)\mathbbm{1}_{|v(t)|\geq A}dt - \frac{(C_3')^2(s_+-s)^2}{2\varkappa}.
				\end{split}
			\end{equation}
			Here we have used inequality~\eqref{lower:ineq:norm_estimate} to bound $|\int_{s}^{s_+}v(t)dt|^2$ by $(C_3')^2(s_+-s)^2$.
			
			For $|v(t)| < A$, the definition of $\omega_A(\cdot)$, property~\ref{lower:prop:b_bound}, and the estimate $|\int_{s}^{s_+}v(t)dt| \leq C_3'\tau$ imply that
			\[L(x(t),v(t)) \geq L(\hat{y},v(t)) - \omega_A(C_3'\tau) - \omega_A(C_1\varkappa).\]
			Therefore,
			\[
			\begin{split}
				\Big(\hat{p} v(t) + &L(x(t),v(t))\Big)\mathbbm{1}_{|v(t)|< A} \\
				\geq &\Big(\hat{p} v(t) + L(\hat{y},v(t))\Big)\mathbbm{1}_{|v(t)|< A} 
				- \Big[\omega_A(C_3'\tau) + \omega_A(C_1\varkappa)\Big]\mathbbm{1}_{|v(t)|< A}.
			\end{split}
			\]
			
			Since $\hat{p}\in \partial_P^+\varphi(\hat{y})$ (see property~\ref{lower:prop:superdiff}) and $(\varphi,\overline{H})$ is a subsolution to~\eqref{intrd:eq:HJ}, we obtain the following:
			\begin{equation}\label{lower:ineq:p_L_leq_A}
				\Big(\hat{p} v(t) + L(x(t),v(t))\Big)\mathbbm{1}_{|v(t)|< A} \geq -\big(\overline{H}+\tfrac{2}{3}\varepsilon\big)\mathbbm{1}_{|v(t)|< A},
			\end{equation}
			where we have also used inequalities~\eqref{lower:ineq:varkappa_choice} and~\eqref{lower:ineq:tau_choice}.
			
			Furthermore, for $|v(t)| \geq A$, the choice of $A$ (see~\eqref{lower:ineq:A}) gives that
			\[
			\begin{split}
				\hat{p} v(t) + L(x(t),v(t)) \geq -K|v(t)| + L(x(t),v(t)) \geq -\overline{H}.
			\end{split}
			\]
			
			Substituting these inequalities into~\eqref{lower:ineq:phi_superdiff} yields:
			\[\begin{split}
				\varphi^\varkappa(x(s_+)) - &\varphi^\varkappa(x(s)) + \int_s^{s_+}L(x(t),v(t))dt \\
				&\geq \Big(-\overline{H} - \tfrac{2}{3}\varepsilon - \tfrac{(C_3')^2(s_+-s)}{2\varkappa}\Big)(s_+-s).
			\end{split}\]
			
			By our choice of $\tau$ in~\eqref{lower:ineq:tau_choice}, we conclude that
			\[\varphi^\varkappa(x(s_+)) - \varphi^\varkappa(x(s)) + \int_s^{s_+}L(x(t),v(t)) \geq -(\overline{H}+\varepsilon)(s_+-s).\]
			
			Since $s\in\mathscr{S}_{\varepsilon,\varkappa}$, this implies the following inequality
			\[\varphi^\varkappa(x(s_+)) - \varphi^\varkappa(x(0)) + \int_0^{s_+}L(x(t),v(t)) \geq -(\overline{H}+\varepsilon)s_+.\]
			
			Thus, we have shown that $s_+ \in \mathscr{S}_{\varepsilon,\varkappa}$ in this case as well.
		\end{enumerate}
		
		\item From the above, $\sup\mathscr{S}_{\varepsilon,\varkappa}=r$. Using property (K3), the inequality $\varphi^\varkappa(x(0))\geq \varphi(x(0))$ and choice of parameter condition \eqref{lower:ineq:varkappa_choice}, we conclude that
		\[\varphi(x(r))+\int_0^rL(x(t),v(t))\geq \varphi(x(0))-\overline{H}r-\varepsilon r-\varepsilon.\]
		Since $\varepsilon>0$ is arbitrary, the statement of the theorem follows.
	\end{enumerate}
\end{proof}

\section{Weak KAM theorem}\label{sect:weak_KAM}
Following \cite{Fathi2012,Sorrentino,Evans2008}, we define the Lax-Oleinik semigroup on $C(\td)$ by the following rule:
\[\begin{split}
	T^+_r\phi(y)\triangleq \inf\Bigg\{\phi(x(r))+\int_0^r&L(x(t),v(t))dt:\\(x&(t),v(t))\text{ is a controlled process, }x(0)=y\Bigg\}.\end{split}\]
	
   The main result of the paper is the following variant of the weak KAM theorem for a nonsmooth Lagrangian.

\begin{theorem}\label{weak:th:main}
	Let $(\varphi,\overline{H})$ be a viscosity solution to \eqref{intrd:eq:HJ}. Then, for every $r>0$:
	\[T^+_r\varphi=\varphi-r\overline{H}.\]
	Moreover, given $\varepsilon>0$, there exists $\varkappa_0>0$ such that, for $\varkappa\in (0,\varkappa_0]$,
	\[\limsup_{d(\Delta)\downarrow 0}\Bigg[\phi(x_{y,\varkappa,r,\Delta}(r))+\int_0^rL(x_{y,\varkappa,r,\Delta}(t),v_{y,\varkappa,r,\Delta}(t))dt\Bigg]\leq \varphi(y)-\overline{H}r+(r+1)\varepsilon.\]
\end{theorem}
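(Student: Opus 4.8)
The plan is to obtain the identity $T^+_r\varphi=\varphi-r\overline{H}$ by combining the two one-sided estimates already established, namely Theorem~\ref{feedback:th:main} (upper bound for a Lipschitz supersolution) and Theorem~\ref{lower:th:main} (lower bound for a Lipschitz subsolution), and then to read off the ``moreover'' assertion directly from Theorem~\ref{feedback:th:main}. The preliminary observation making everything fit together is that, since $L$ satisfies~\eqref{feedback:cond:L}, the Hamiltonian $H$ is continuous and coercive in the sense of~\eqref{notation:cond:H_coercitivity}; hence, by \cite[Proposition II.4.1]{Bardi2008c}, the function $\varphi$ of any viscosity solution $(\varphi,\overline{H})$ is automatically Lipschitz continuous. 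Being a viscosity solution, $(\varphi,\overline{H})$ is simultaneously a sub- and a supersolution to~\eqref{intrd:eq:HJ}, so both Theorem~\ref{feedback:th:main} and Theorem~\ref{lower:th:main} apply to it.

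First I would prove the lower bound $T^+_r\varphi\geq \varphi-r\overline{H}$. Because $(\varphi,\overline{H})$ is a Lipschitz subsolution, Theorem~\ref{lower:th:main} gives, for every $r>0$ and every controlled process $(x(\cdot),v(\cdot))$ on $[0,r]$ with $x(0)=y$, the estimate $\varphi(x(r))+\int_0^rL(x(t),v(t))\,dt\geq \varphi(y)-\overline{H}r$. Taking the infimum over all controlled processes issuing from $y$ yields $T^+_r\varphi(y)\geq\varphi(y)-\overline{H}r$ for every $y\in\td$.

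Next I would prove the matching upper bound. Fix $\varepsilon>0$. Since $(\varphi,\overline{H})$ is a Lipschitz supersolution, Theorem~\ref{feedback:th:main} provides $\varkappa_0>0$ and $\delta:(0,\varkappa_0]\to(0,+\infty)$ such that for every $r>0$, every $\varkappa\in(0,\varkappa_0]$, every partition $\Delta$ of $[0,r]$ with $d(\Delta)\leq\delta(\varkappa)$, and every $y\in\td$,
\[\varphi(x_{y,\varkappa,r,\Delta}(r))+\int_{0}^rL(x_{y,\varkappa,r,\Delta}(t),v_{y,\varkappa,r,\Delta}(t))\,dt\leq \varphi(y)-\overline{H}r+(r+1)\varepsilon.\]
By construction the pair $(x_{y,\varkappa,r,\Delta}(\cdot),v_{y,\varkappa,r,\Delta}(\cdot))$ is piecewise affine with velocity bounded by $C_3$ (Proposition~\ref{feedback:prop:bound}); in particular $v_{y,\varkappa,r,\Delta}(\cdot)\in L^1([0,r];\rd)$, the curve is continuous with $x_{y,\varkappa,r,\Delta}(s)-x_{y,\varkappa,r,\Delta}(0)=\int_0^sv_{y,\varkappa,r,\Delta}(t)\,dt$, and $x_{y,\varkappa,r,\Delta}(0)=y$, so it is an admissible controlled process competing in the infimum defining $T^+_r\varphi(y)$. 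Hence $T^+_r\varphi(y)\leq \varphi(y)-\overline{H}r+(r+1)\varepsilon$, and letting $\varepsilon\downarrow 0$ gives $T^+_r\varphi(y)\leq\varphi(y)-\overline{H}r$. Combined with the previous paragraph this proves $T^+_r\varphi=\varphi-r\overline{H}$. The ``moreover'' part is precisely the displayed inequality above, read for a fixed $\varkappa\in(0,\varkappa_0]$: since it holds for every partition with $d(\Delta)\leq\delta(\varkappa)$, the quantity in brackets has $\limsup_{d(\Delta)\downarrow 0}$ bounded by $\varphi(y)-\overline{H}r+(r+1)\varepsilon$.

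I do not expect a genuine obstacle here: the substance of the result is carried entirely by Theorems~\ref{feedback:th:main} and~\ref{lower:th:main}, which are already proved, and the only points requiring a line of care are the two noted above — that the Lipschitz hypothesis demanded by both theorems is automatic for viscosity solutions under the standing assumption on $L$, and that the proximal-aiming arc, being continuous, piecewise affine and of bounded velocity, is a legitimate competitor in the definition of the Lax--Oleinik semigroup $T^+_r$.
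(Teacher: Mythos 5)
Your proposal is correct and matches the paper's argument: the paper likewise deduces the theorem immediately from Theorems~\ref{feedback:th:main} and~\ref{lower:th:main} together with the Lipschitz continuity of $\varphi$ as a viscosity solution of~\eqref{intrd:eq:HJ}. Your additional checks (coercivity of $H$ ensuring the Lipschitz property, and admissibility of the proximal-aiming arc as a controlled process) are exactly the details the paper leaves implicit.
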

This result follows immediately from Theorems~\ref{feedback:th:main} and \ref{lower:th:main} combined with the Lipschitz continuity of $\varphi$ as a viscosity solution to equation~\eqref{intrd:eq:HJ}.

The following result is also valid.

\begin{corollary}\label{weak:corollary:averaged}
	Let $(\varphi,\overline{H})$ be a viscosity solution to \eqref{intrd:eq:HJ}. Then, for every $y\in\td$:
	\[\begin{split}\lim_{r\rightarrow+\infty}\inf\Bigg\{\frac{1}{r}\int_0^rL(x(t),v&(t))dt:\\ (x&(t),v(t))\text{ is a controlled process, }x(0)=y \Bigg\}=-\overline{H}.\end{split}\]
	Furthermore, given $\varepsilon>0$, there exists $\varkappa_0>0$ such that, for every $\varkappa\in (0,\varkappa_0]$ and $y\in\td$,
	\[\lim_{r\rightarrow+\infty}\limsup_{d(\Delta)\downarrow 0}\frac{1}{r}\int_0^rL(x_{y,\varkappa,r,\Delta}(t),v_{y,\varkappa,r,\Delta}(t))dt\leq -\overline{H}+\varepsilon.\]
\end{corollary}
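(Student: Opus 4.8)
Looking at this, I need to prove Corollary \ref{weak:corollary:averaged}, which asserts the averaged convergence result follows from the weak KAM theorem (Theorem \ref{weak:th:main}).

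Let me think about the proof structure.

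**The plan:** The first assertion says $\lim_{r\to\infty}\frac{1}{r}\inf\{\int_0^r L\,dt : x(0)=y\} = -\overline{H}$. By definition, $\inf\{\int_0^r L(x(t),v(t))dt : x(0)=y\} = T^+_r\varphi(y) - [\text{something}]$... wait, not quite. Actually $T^+_r\varphi(y) = \inf\{\varphi(x(r)) + \int_0^r L\,dt\}$, and since $\varphi$ is bounded (continuous on compact $\td$), we have $|T^+_r\varphi(y) - \inf\{\int_0^r L\,dt\}| \le 2\|\varphi\|_{C(\td)}$. By Theorem \ref{weak:th:main}, $T^+_r\varphi(y) = \varphi(y) - r\overline{H}$, so dividing by $r$ and letting $r\to\infty$ gives the first claim. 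For the second assertion, use the second part of Theorem \ref{weak:th:main}: the limsup of the feedback payoff (including $\varphi(x(r))$) is at most $\varphi(y)-\overline{H}r + (r+1)\varepsilon$; subtract the bounded term $\varphi(x_{y,\varkappa,r,\Delta}(r))$ and divide by $r$.

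<br>

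\begin{proof}[Proof of Corollary~\ref{weak:corollary:averaged}]
We first establish the assertion about the value. Set
\[M_r(y)\triangleq\inf\Bigg\{\int_0^rL(x(t),v(t))dt:\, (x(t),v(t))\text{ is a controlled process, }x(0)=y\Bigg\}.\]
Since $\varphi$ is continuous on the compact set $\td$, it is bounded; put $c_\varphi\triangleq\|\varphi\|_{C(\td)}$. For any controlled process with $x(0)=y$ we have
\[\int_0^rL(x(t),v(t))dt-c_\varphi\leq \varphi(x(r))+\int_0^rL(x(t),v(t))dt\leq \int_0^rL(x(t),v(t))dt+c_\varphi,\]
and taking the infimum over controlled processes with $x(0)=y$ yields $|T^+_r\varphi(y)-M_r(y)|\leq c_\varphi$. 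By Theorem~\ref{weak:th:main}, $T^+_r\varphi(y)=\varphi(y)-r\overline{H}$, hence
\[\Bigg|\frac{1}{r}M_r(y)+\overline{H}\Bigg|\leq\frac{\varphi(y)+c_\varphi}{r}\xrightarrow[r\to\infty]{}0,\]
which proves the first claim.

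For the second assertion, fix $\varepsilon>0$ and let $\varkappa_0>0$ be as in Theorem~\ref{weak:th:main}. Fix $\varkappa\in(0,\varkappa_0]$ and $y\in\td$. Since $\varphi(x_{y,\varkappa,r,\Delta}(r))\geq -c_\varphi$ regardless of $r$ and $\Delta$, the second inequality of Theorem~\ref{weak:th:main} gives
\[-c_\varphi+\int_0^rL(x_{y,\varkappa,r,\Delta}(t),v_{y,\varkappa,r,\Delta}(t))dt\leq \varphi(y)-\overline{H}r+(r+1)\varepsilon\]
after passing to the limsup over $d(\Delta)\downarrow 0$. Dividing by $r$ we obtain
\[\limsup_{d(\Delta)\downarrow 0}\frac{1}{r}\int_0^rL(x_{y,\varkappa,r,\Delta}(t),v_{y,\varkappa,r,\Delta}(t))dt\leq \frac{\varphi(y)+c_\varphi+\varepsilon}{r}-\overline{H}+\varepsilon.\]
Letting $r\to\infty$, the first term on the right vanishes, and we conclude
\[\lim_{r\to\infty}\limsup_{d(\Delta)\downarrow 0}\frac{1}{r}\int_0^rL(x_{y,\varkappa,r,\Delta}(t),v_{y,\varkappa,r,\Delta}(t))dt\leq -\overline{H}+\varepsilon,\]
as required. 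The only point requiring care is that the bound from Theorem~\ref{weak:th:main} is uniform in $y$ and in the partition, so that the inner limsup and the outer limit interact correctly; this is immediate from the statement of that theorem, and the boundedness of $\varphi$ is exactly what allows the additive constant $c_\varphi$ to be absorbed after division by $r$.
\end{proof}
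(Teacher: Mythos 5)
Your proof is correct and follows exactly the route the paper intends: the corollary is deduced from Theorem~\ref{weak:th:main} by observing that $\varphi$ is bounded on the compact torus, so the terminal term $\varphi(x(r))$ contributes only an $O(1)$ error that disappears after dividing by $r$. One cosmetic slip: in the first part the two-sided bound gives $\bigl|\tfrac{1}{r}M_r(y)+\overline{H}\bigr|\leq (|\varphi(y)|+c_\varphi)/r\leq 2c_\varphi/r$ rather than $(\varphi(y)+c_\varphi)/r$ (which could fail as an absolute-value bound when $\varphi(y)<0$), but this does not affect the limit and the argument stands.
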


\begin{remark}
	This establishes the near-optimality of the feedback strategy $\mathbbm{v}_\varkappa$ defined in \eqref{feedback:intro:v_varkappa}.
\end{remark}

\section{Mather measure}\label{sect:mather}
In this section, we work with probability measures on $\TdR$, i.e., nonnegative measures on $\TdR$ normalized to 1.

\begin{definition}\label{mather:def:holonomic}
	A probability measure $\nu$ on $\TdR$ is called holonomic provided that, for every $\phi\in C^1(\td)$, the following equality holds true:
	\[\int_{\TdR}\nabla\phi(x)v\nu(d(x,v))=0.\]
\end{definition}

Hereinafter, $\nabla\phi(x)$ stands for the gradient of the function $\phi$ at the point $x\in\td$. We assume that $\nabla\phi(x)\in\rds$.

Our primary focus will be on minimizing the functional $\nu\mapsto \int_{\TdR}L(x,v)\nu(d(x,v))$ over the class of all holonomic probability measures.

\begin{definition}\label{mather:def:mather} 
	A Mather measure is a holonomic measure $\mu$ satisfying
	\[\begin{split}
		\int_{\TdR}L(x&,v)\mu(d(x,v)) \\
		&= \inf\Bigg\{\int_{\TdR}L(x,v)\nu(d(x,v)) : \nu \text{ is a holonomic measure}\Bigg\}.
	\end{split}\]
\end{definition}

\begin{theorem}\label{mather:th:mather}
	The Mather measure $\mu$ exists. Moreover, if $H$ is a Ma\~n\'e critical value, then
	\[\int_{\TdR}L(x,v)\mu(d(x,v))=-\overline{H}.\]
\end{theorem}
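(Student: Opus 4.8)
The plan is to establish the two assertions separately but by related means: existence of a minimizer via a compactness/lower-semicontinuity argument, and the identity with $-\overline{H}$ via two matching inequalities, one coming from each of the feedback construction (Theorem~\ref{feedback:th:main}) and the lower bound (Theorem~\ref{lower:th:main}).

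For existence, the first task is to show the feasible set of holonomic probability measures is nonempty, weakly compact in an appropriate sense, and that $\nu\mapsto\int L\,d\nu$ is weakly lower semicontinuous. Nonemptiness is easy: any measure concentrated on $\td\times\{0\}$ whose marginal on $\td$ is a fixed point of the translation action, e.g. the product of Lebesgue measure on $\td$ with the Dirac mass at $0$, is holonomic since $\int\nabla\phi(x)\cdot 0\,\nu=0$ trivially. Compactness is the delicate point because $\td\times\rd$ is noncompact in the $v$-variable; I would fix this by using the superlinearity~\eqref{feedback:cond:L} to obtain a coercive lower bound on $L$, so that any minimizing sequence $\{\nu_n\}$ with $\int L\,d\nu_n$ bounded has uniformly integrable $v$-marginals (a de la Vall\'ee-Poussin / Prokhorov argument), hence is tight; extract a weakly convergent subsequence $\nu_n\rightharpoonup\mu$. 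The holonomy constraint $\int\nabla\phi\,v\,d\nu=0$ passes to the limit for each fixed $\phi\in C^1(\td)$ because $(x,v)\mapsto\nabla\phi(x)v$ grows only linearly in $v$ while the $v$-marginals are uniformly integrable, so it is a continuous test against the weak convergence after the standard truncation argument. Lower semicontinuity of $\int L\,d\mu$ under weak convergence with a superlinear (hence bounded-below, coercive) continuous integrand is classical (e.g. via $L=\sup$ of continuous bounded functions, or Fatou after truncation). This yields the Mather measure $\mu$.

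For the identity, write $m\triangleq\inf\{\int_{\TdR}L\,d\nu:\nu\text{ holonomic}\}$. To get $m\geq -\overline{H}$: for a holonomic $\nu$, the idea is to test the \emph{subsolution} property against $\nu$. One first smooths $\varphi$ (say by mollification or by the Moreau-Yosida machinery already developed) to get $C^1$ approximants whose gradients lie, up to small error, in $\partial_D^+\varphi$; then $H(x,-\nabla\varphi)\leq\overline{H}$ gives $-\nabla\varphi(x)v-L(x,v)\leq\overline{H}$ pointwise, i.e. $L(x,v)\geq -\nabla\varphi(x)v-\overline{H}$; integrate against $\nu$ and use holonomy $\int\nabla\varphi\cdot v\,d\nu=0$ to conclude $\int L\,d\nu\geq -\overline{H}$, hence $m\geq -\overline{H}$. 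The reverse inequality $m\leq -\overline{H}$ is where Corollary~\ref{weak:corollary:averaged} enters: take the discretized near-optimal trajectories $x_{y,\varkappa,r,\Delta}(\cdot)$ and form the occupation measures $\nu_{r}\triangleq\frac{1}{r}\int_0^r\delta_{(x(t),v(t))}\,dt$ on $\TdR$; these are ``almost holonomic'' with a defect of order $O(1/r)$ coming from the telescoping identity $\frac1r\int_0^r\nabla\phi(x(t))\dot x(t)\,dt=\frac1r(\phi(x(r))-\phi(x(0)))$, and, by Corollary~\ref{weak:corollary:averaged} and the velocity bound in Proposition~\ref{feedback:prop:bound}, they have uniformly bounded velocities and $\int L\,d\nu_r\to$ something $\leq -\overline{H}+\varepsilon$. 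Passing to a weak limit as $r\to\infty$ (tight, since velocities are bounded by $C_3$) produces a genuine holonomic probability measure with action $\leq -\overline{H}+\varepsilon$; letting $\varepsilon\downarrow 0$ gives $m\leq -\overline{H}$. Combining, $m=-\overline{H}$, and since $\mu$ attains $m$, the stated equality follows.

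The main obstacle I expect is the noncompactness in the velocity variable: making the weak-convergence arguments rigorous requires carefully controlling the $v$-marginals — tightness for the minimizing sequence in the existence proof (via superlinearity of $L$) and uniform boundedness for the occupation measures in the $m\leq-\overline{H}$ direction (via Proposition~\ref{feedback:prop:bound}), together with verifying that the linear-in-$v$ holonomy functional and the superlinear-in-$v$ action functional behave correctly under these weak limits. A secondary technical point is the smoothing of $\varphi$ in the $m\geq -\overline{H}$ step: one must ensure the mollified gradients genuinely land in (a neighborhood of) $\partial_D^+\varphi$ so that the subsolution inequality is available; here the Lipschitz continuity of $\varphi$ and standard properties of $\partial_D^+$ for semiconcave-type approximations should suffice, but it needs to be done with care.
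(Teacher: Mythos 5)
Your plan for the identity is essentially the paper's: the lower bound $\int L\,d\nu\geq-\overline{H}$ for every holonomic $\nu$ via mollification of the subsolution plus the holonomy constraint (with the tail in $v$ controlled by superlinearity and the bound $|\nabla\varphi^{(\delta)}|\leq K$), and the upper bound via occupation measures of the near-optimal feedback trajectories from Corollary~\ref{weak:corollary:averaged}, whose velocities are bounded by $C_3$ (Proposition~\ref{feedback:prop:bound}), so the narrow limit is a holonomic measure with action $\leq-\overline{H}$. Two remarks. First, your separate direct-method existence argument (minimizing sequence, de la Vall\'ee-Poussin tightness, lower semicontinuity) is correct but redundant: once the occupation-measure limit $\mu$ is shown to be holonomic with $\int L\,d\mu\leq-\overline{H}$, and every holonomic measure has action $\geq-\overline{H}$, this $\mu$ already attains the infimum, which is how the paper obtains existence for free. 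Second, the one step where your suggested mechanism would not work as stated is the smoothing: for a general Lipschitz $\varphi$ the mollified gradient $\nabla\varphi^{(\delta)}(x)$, being an average of gradients at nearby points, need not lie in (or near) $\partial_D^+\varphi(x)$ --- that membership argument is only available for semiconcave-type functions. The correct route, used in Lemma~\ref{mather:lm:subsol}, is: by Rademacher's theorem $\varphi$ is differentiable a.e., and at such points $\nabla\varphi\in\partial_D^+\varphi$, so $H(x-w,-\nabla\varphi(x-w))\leq\overline{H}$ for a.e.\ $w$; one then fixes the maximizer $v^0[x,\nabla\varphi^{(\delta)}(x)]$, which is bounded by $C_3$ thanks to the coercivity of $L$ and $|\nabla\varphi^{(\delta)}|\leq K$, tests the a.e.\ inequality with this single velocity, and averages against the mollifier, paying only the error $\omega_{C_3}(\delta)$ from shifting the $x$-argument of $L$ (in effect exploiting convexity of $H$ in $p$). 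With that substitution, and noting that the resulting smooth pair $(\varphi^{(\delta)},\overline{H}+\omega_{C_3}(\delta))$ gives the pointwise inequality $\nabla\varphi^{(\delta)}(x)v+L(x,v)\geq-(\overline{H}+\omega_{C_3}(\delta))$ for all $v$, your integration-plus-holonomy step goes through (using $\int|v|\,d\nu<\infty$, which follows from finiteness of the action and superlinearity), and the proposal is sound.
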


The proof of this result relies on the following lemma, whose proof we include for completeness, largely following the approach in \cite[p.72]{Tran}.

\begin{lemma}\label{mather:lm:subsol} 
	Let $(\varphi,\overline{H})$ be a subsolution to equation~\eqref{intrd:eq:HJ}, where $\varphi$ is Lipschitz continuous with constant $K$. Let $\eta\in C^\infty(\rd)$ be a smoothing kernel (i.e., $\eta$ vanishes outside $\mathbbm{B}_1$ and satisfies $\int_{\mathbbm{B}_1}\eta(v)dv=1$). Define the regularized function
	\[\varphi^{(\delta)}(x)\triangleq (\varphi*\eta_\delta)(x)=\int_{\mathbbm{B}_1}\varphi(x-w)\eta_\delta(w)dw,\]
	where $\eta_\delta(w)\triangleq \delta^{-d}\eta(w/\delta)$. Then, the pair $(\varphi^{(\delta)},\overline{H}+\omega_{C_3}(\delta))$ is a subsolution to equation~\eqref{intrd:eq:HJ} with $C_3$ being the constant introduced in Proposition~\ref{feedback:prop:bound}.
\end{lemma}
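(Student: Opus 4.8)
The plan is to verify the subsolution inequality for $(\varphi^{(\delta)}, \overline{H}+\omega_{C_3}(\delta))$ directly from the definition: for each $x\in\td$ and each $p\in\partial_D^+\varphi^{(\delta)}(x)$, I must show $H(x,-p)\leq\overline{H}+\omega_{C_3}(\delta)$. Since $\varphi^{(\delta)}$ is $C^\infty$ (being a mollification of a Lipschitz function), its only superdifferential element at $x$ is the classical gradient $\nabla\varphi^{(\delta)}(x)$, which equals $\int_{\mathbbm{B}_1}\nabla\varphi(x-w)\eta_\delta(w)\,dw$ wherever $\varphi$ is differentiable — more robustly, $\nabla\varphi^{(\delta)}(x)$ is an average of proximal (hence Hadamard) supergradients of $\varphi$ at nearby points $x-w$ with $|w|\leq\delta$. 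The key step is therefore to pass the Hamiltonian inequality through this averaging.

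First I would recall that $H(x,-p) = \max_{v}[-pv - L(x,v)]$, and that by Proposition~\ref{feedback:prop:bound}-type reasoning (the Lipschitz bound $K$ on $\varphi$ forces the maximizing velocity to lie in $\mathbbm{B}_{C_3}$) the maximum defining $H(x,q)$ for any $|q|\le K$ is attained at some $v^*$ with $|v^*|\le C_3$. So fix $x$, let $p=\nabla\varphi^{(\delta)}(x)$, and pick such a maximizer $v^*$, $|v^*|\le C_3$, with $H(x,-p) = -pv^* - L(x,v^*)$. Writing $p = \int_{\mathbbm{B}_1} q(x-w)\,\eta_\delta(w)\,dw$ where $q(y)\in\partial_D^+\varphi(y)$ (using that $\varphi$ is differentiable a.e. and $\nabla\varphi$ is a supergradient there, and $\|q\|\le K$), I would compute
\[
H(x,-p) = -pv^* - L(x,v^*) = \int_{\mathbbm{B}_1}\Big(-q(x-w)v^* - L(x,v^*)\Big)\eta_\delta(w)\,dw.
\]
Now for each fixed $w$ with $|w|\le\delta$, replace $L(x,v^*)$ by $L(x-w,v^*)$ at the cost of $\omega_{C_3}(\delta)$ (since $|v^*|\le C_3$ and $\rho_\td(x,x-w)\le\delta$), and then observe $-q(x-w)v^* - L(x-w,v^*) \le H(x-w,-q(x-w)) \le \overline{H}$ because $q(x-w)\in\partial_D^+\varphi(x-w)$ and $(\varphi,\overline{H})$ is a subsolution. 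Integrating against $\eta_\delta$ (total mass $1$, nonnegative) yields $H(x,-p)\le\overline{H}+\omega_{C_3}(\delta)$, as required.

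The main obstacle is the justification that $\nabla\varphi^{(\delta)}(x)$ is genuinely an $\eta_\delta$-average of Hadamard supergradients of $\varphi$ — this requires invoking Rademacher's theorem (so $\varphi$ is differentiable a.e., and at such points $\nabla\varphi(y)\in\partial_D^+\varphi(y)$) and differentiating under the integral sign, which is legitimate because $\nabla\varphi\in L^\infty$ with $\|\nabla\varphi\|_\infty\le K$ and $\eta_\delta\in C_c^\infty$. One must also be slightly careful that the argument $x-w$ ranges over $\td$ correctly under the mollification (the kernel is supported in $\mathbbm{B}_1\subset\rd$ and $\delta<1$, so $x-w$ is an honest point of the torus and distances behave as claimed). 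A minor point: one should note $|p|=|\nabla\varphi^{(\delta)}(x)|\le K$ so that the velocity maximizer really is confined to $\mathbbm{B}_{C_3}$, matching the modulus $\omega_{C_3}$ in the statement. Everything else is routine.
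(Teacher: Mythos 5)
Your proposal is correct and follows essentially the same route as the paper's proof: both use Rademacher's theorem to write $\nabla\varphi^{(\delta)}(x)$ as an $\eta_\delta$-average of a.e.\ gradients of $\varphi$ (which are Hadamard supergradients), fix a maximizer $v^*$ of $H(x,-p)$ with $|v^*|\leq C_3$ thanks to the bound $|p|\leq K$, apply the subsolution inequality at the nearby points $x-w$, and absorb the shift in the Lagrangian's first argument into $\omega_{C_3}(\delta)$ before integrating against the kernel. No substantive difference.
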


\begin{proof}
	Let $K$ denote the Lipschitz constant of the constant $\varphi$. Observe that, for all $x\in\td$, $p\in\rds$,
	\[H(x,-p)=\max_{v\in\rd}\big[(-p)v-L(x,v)\big]\geq -C_1',\]
	where $C_1'\triangleq \sup_{y\in\td}|L(y,0)|$ as previously defined.
	
	Recall that, in the proof of Proposition~\ref{feedback:prop:bound}, $C_3\geq 1$ was chosen such that $L(x,v)\geq (K+C_1'+1)$ whenever $|v|\geq C_3$. Consequently, for such $v$ and every $p$ with $|p|\leq K$, we have
	\[(-p)v-L(x,v)\leq -(|C_1'|+1)|v| < -C_1'.\]
	This guarantees the existence of a maximizer $v^0[x,p]\in\rd$ satisfying both
	\[H(x,-p)=(-p)v^0[x,p]-L(x,v^0[x,p])\]
	and $|v^0[x,p]|\leq C_3$.
	
	By Rademacher's theorem (see \cite[Theorem 3.2]{Evans}), the Lipschitz continuity of $\varphi$ implies its differentiability almost everywhere on $\td$. For fixed $x\in\td$, let $p\triangleq \nabla\varphi^{(\delta)}(x)$, which can be expressed as
	\[p=\int_{\mathbbm{B}_\delta}\nabla\varphi(x-w)\eta_\delta(w)dw.\]
	
	Since $(\varphi,\overline{H})$ is a subsolution to~\eqref{intrd:eq:HJ}, the inequality
	\[\big(-\nabla\varphi(x-w)\big)v^0[x,p]-L(x-w,v^0[x,p])\leq \overline{H}\]
	holds for a.e. $w\in\mathbbm{B}_\delta$. Multiplying by $\eta_\delta(w)$, integrating over $\mathbbm{B}_\delta$, and employing both the bound $|v^0[x,p]|\leq C_3$ and the definition of $\omega_{C_3}$, we obtain
	\[\big(-\nabla\varphi^{\delta}(x)\big)v^0[x,p]-L(x,v^0[x,p])\leq \overline{H}+\omega_{C_3}(\delta).\]
	The lemma's conclusion follows directly from this inequality and the characterization of $v^0[x,p]$.
\end{proof}

\begin{proof}[Proof of Theorem~\ref{mather:th:mather}] 
	We first establish that, for each holonomic measure $\nu$, the inequality
	\begin{equation}\label{mather:ineq:int_L_lower}
		\int_{\TdR}L(x,v)\nu(d(x,v)) \geq -\overline{H}
	\end{equation} 
	holds. Without loss of generality, we may assume $\int_{\TdR}L(x,v)\nu(d(x,v)) < \infty$. This, by the superlinearity of $L$, implies $\int_{\TdR}|v|\nu(d(x,v)) < \infty$.
	
	Fix $\varepsilon > 0$ and choose $A > 0$ sufficiently large so that:
	\[K\int_{\TdR}\mathbbm{1}_{|v|\geq A}|v|\nu(d(x,v)) \leq \varepsilon,\]
	and $L(x,v) \geq 0$ for all $|v| > A$. Let $\omega_A(\cdot)$ denote, as before, the modulus of continuity of $L$ with respect to its first argument on $\td\times\mathbbm{B}_A$. Furthermore, we select $\delta > 0$ satisfying \[\omega_{C_3}(\delta) \leq \varepsilon.\]
	
	For every $x \in \td$ and $v \in \rd$, consider the linear trajectory $t \mapsto x + tv$. Lemma~\ref{mather:lm:subsol} and Theorem~\ref{lower:th:main} yield the inequality
	\begin{equation}\label{mather:ineq:phi_delta_L}
		\varphi^{(\delta)}(x+\tau v) - \varphi^{(\delta)}(x) + \int_0^\tau L(x+tv,v)dt \geq -(\overline{H}+\omega_{C_3}(\delta))\tau.
	\end{equation}
	
	Observe that
	\[\left|\varphi^{(\delta)}(x+\tau v) - \varphi^{(\delta)}(x) - \nabla\varphi^{(\delta)}(x)v\right| \leq \tau|v|\xi^{(\delta)}(\tau|v|),\]
	where $\xi^{(\delta)}(\epsilon) \to 0$ as $\epsilon \to 0$ (though not necessarily uniformly in $\delta$). Moreover, for $|v| \leq A$, one has that
	\[|L(x+tv,v) - L(x,v)| \leq \omega_A(A\tau).\]
	
	For $|v| \leq A$, dividing \eqref{mather:ineq:phi_delta_L} by $\tau$ and letting $\tau \to 0$, we arrive at the inequality:
	\begin{equation*}
		\nabla\varphi^{(\delta)}(x)v + L(x,v) \geq -(\overline{H}+\omega_{C_3}(\delta)) \text{ for all } x\in\td, |v|\leq A.
	\end{equation*}
	
	Integrating over $\nu$ yields
	\begin{equation}\label{mather:ineq:L_nabla_phi}
		\begin{split}
			\int_{\TdR}\mathbbm{1}_{|v|\leq A}\nabla\varphi^{(\delta)}(x)v\nu(d(x,v)) + \int_{\TdR}\mathbbm{1}_{|v|\leq A}&L(x,v)\nu(d(x,v)) \\
			&\geq -(\overline{H}+\omega_{C_3}(\delta)).
		\end{split}
	\end{equation}
	
	Furthermore, recall that $\omega_{C_3}(\delta) \leq \varepsilon$, $|\nabla\varphi^{(\delta)}(x)| \leq K$ (this is due to Lipschitz continuity of the function $\varphi$),  $L(x,v) \geq 0$ for $|v| > A$, whereas $K\int_{|v|>A}|v|\nu(d(x,v))\leq\varepsilon$. This and inequality~\eqref{mather:ineq:L_nabla_phi} yield the following lower bound:
	\[
	\int_{\TdR}\nabla\varphi^{(\delta)}(x)v\nu(d(x,v)) + \int_{\TdR}L(x,v)\nu(d(x,v)) \geq -\overline{H} - 2\varepsilon.
	\]
	
	The holonomic property of $\nu$ then implies that, for every $\varepsilon > 0$:
	\[\int_{\TdR}L(x,v)\nu(d(x,v)) \geq -\overline{H} - 2\varepsilon,\]
	which establishes \eqref{mather:ineq:int_L_lower}.
	
	To construct a holonomic measure $\mu$ satisfying
	\begin{equation}\label{mather:ineq:L_upper}
		\int_{\TdR}L(x,v)\mu(d(x,v)) \leq -\overline{H},
	\end{equation}
	we proceed as follows. Fix $y \in \td$ and apply Corollary~\ref{weak:corollary:averaged}. For each $N \in \mathbb{N}$, there exist a parameter $\varkappa_N$ and a partition $\Delta_N$ of $[0,N]$ such that the controlled process:
	\[(\hat{x}_N(\cdot),\hat{v}_N(\cdot)) = (x_{y,\varkappa_N,N,\Delta_N}(\cdot), v_{y,\varkappa_N,N,\Delta_N}(\cdot))\]
	satisfies
	\begin{equation}\label{mather:ineq:int_L_N}
		\frac{1}{N}\int_0^N L(\hat{x}_N(t),\hat{v}_N(t))dt \leq -\overline{H} + \frac{1}{N}.
	\end{equation}
	
	By Proposition~\ref{feedback:prop:bound}, we have that, for every $t\in [0,T]$,
	\begin{equation}\label{mather:ineq:v_hat}
		|\hat{v}_N(t)| \leq C_3.
	\end{equation}
	
	Let us define probability measures $\mu_N$ on $\TdR$ as follows:
	\[\int_{\TdR}\psi(x,v)\mu_N(d(x,v)) \triangleq \frac{1}{N}\int_0^N \psi(\hat{x}_N(t),\hat{v}_N(t))dt \quad \forall \psi \in C_b(\TdR).\]
	Due to \eqref{mather:ineq:v_hat}, each measure $\mu_N$ is supported on $\td\times\mathbbm{B}_{C_3}$. By \cite[Theorem 5.1]{Billingsley1999}, there exists a subsequence $\{\mu_{N_k}\}$ converging narrowly to some probability measure $\mu$. The latter is concentrated on $\td\times\mathbbm{B}_{C_3}$.
	
	To verify that $\mu$ is holonomic, consider any $\phi \in C^1(\td)$. We have that
	\[\phi(\hat{x}_{N_k}(N_k)) - \phi(\hat{x}_{N_k}(0)) = \int_0^{N_k} \nabla\phi(\hat{x}_{N_k}(t))\hat{v}_{N_k}(t)dt.\]
	
	Using the definition of the measure $\mu_N$ with a function $\psi\in C_b(\td\times\rd)$ such that  $\psi(x,v) = \nabla\phi(x)v$ when $|v|\leq C_3$, we deduce that
	\[\frac{1}{N_k}\left[\phi(\hat{x}_{N_k}(N_k)) - \phi(\hat{x}_{N_k}(0))\right] = \int_{\TdR} \nabla\phi(x)v \mu_{N_k}(d(x,v)).\]
	Taking $k \to \infty$ establishes the holonomic property for $\mu$.
	
	Finally, from \eqref{mather:ineq:int_L_N} we have the inequality:
	\[\int_{\TdR} L(x,v) \mu_{N_k}(d(x,v)) \leq -\overline{H} + \frac{1}{N_k}.\]
	Letting $k \to \infty$ yields \eqref{mather:ineq:L_upper}. This completing the proof.
\end{proof}

\bibliography{KAM_proximal.bib}

\begin{thebibliography}{10}

\bibitem{Bardi2008c}
M.~Bardi and I.~Capuzzo-Dolcetta.
\newblock {\em Optimal control and viscosity solutions of
  {Hamilton}-{Jacobi}-{Bellman} equations}.
\newblock Mod. Birkh{\"a}user Class. Birkh{\"a}user, Basel, Switzerland,
  reprint of the 1997 original edition, 2008.

\bibitem{Bernard2012}
P.~Bernard.
\newblock The lax–oleinik semi-group: a hamiltonian point of view.
\newblock {\em Proc. R. Soc. Edinb. A: Math.}, 142(6):1131--1177, 2012.

\bibitem{Billingsley1999}
P.~Billingsley.
\newblock {\em Convergence of Probability Measures}.
\newblock Wiley, Hoboken, NJ, 1999.

\bibitem{Biryuk2010}
A.~Biryuk and D.~A. Gomes.
\newblock An introduction to the {Aubry}-{Mather} theory.
\newblock {\em S{\~a}o Paulo J. Math. Sci.}, 4(1):17--63, 2010.

\bibitem{Buttazzo1998}
G.~Buttazzo, M.~Giaquinta, and S.~Hildebrandt.
\newblock {\em One-dimensional variational problems. {An} introduction},
  volume~15 of {\em Oxf. Lect. Ser. Math. Appl.}
\newblock Clarendon Press, Oxford, UK, 1998.

\bibitem{Clarke1999}
F.~Clarke, Y.~S. Ledyaev, and A.~I. Subbotin.
\newblock Universal feedback control via proximal aiming in problems of control
  under disturbance and differential games.
\newblock In {\em Algebra. Topology. Differential equations and their
  applications, Collection of papers dedicated to the 90th anniversary of
  academician Lev Semenovich Pontryagin}, pages 165--186. Moskva: Nauka, Maik
  Nauka/ Interperiodika, 1999.

\bibitem{Clarke1998a}
F.~H. Clarke, Y.~S. Ledyaev, R.~J. Stern, and P.~R. Wolenski.
\newblock {\em Nonsmooth analysis and control theory}, volume 178 of {\em Grad.
  Texts Math.}
\newblock New York, NY: Springer, 1998.

\bibitem{Evans2004}
L.~C. Evans.
\newblock A survey of partial differential equations methods in weak {K}{A}{M}
  theory.
\newblock {\em Commun. Pure Appl. Math.}, 57(4):445--480, 2004.

\bibitem{Evans2008}
L.~C. Evans.
\newblock Weak {K}{A}{M} theory and partial differential equations.
\newblock In {\em Calculus of variations and nonlinear partial differential
  equations. Lectures given at the C.I.M.E. summer school, Cetraro, Italy, June
  27--July 2, 2005. With a historical overview by Elvira Mascolo}, pages
  123--154. Springer, Berlin, Germany, 2008.

\bibitem{Evans}
L.~C. Evans and R.~F. Gariepy.
\newblock {\em Measure Theory and Fine Properties of Functions}.
\newblock Chapman and Hall/CRC, Broken Sound Parkway, NW, revised edition,
  2015.

\bibitem{Fathi2012}
A.~Fathi.
\newblock Weak {K}{A}{M} from a {PDE} point of view: viscosity solutions of the
  {Hamilton}-{Jacobi} equation and {Aubry} set.
\newblock {\em Proc. R. Soc. Edinb. A: Math.}, 142(6):1193--1236, 2012.

\bibitem{Fathi2007}
A.~Fathi and E.~Maderna.
\newblock Weak {KAM} theorem on non compact manifolds.
\newblock {\em Nonlinear Differ. Equ. Appl.}, 14(1-2):1--27, 2007.

\bibitem{Gomes2014}
D.~A. Gomes and G.~Terrone.
\newblock The {M}ather problem for lower semicontinuous lagrangians.
\newblock {\em Nonlinear Differ. Equ. Appl.}, 21(2):167--217, 2014.

\bibitem{NN_PDG}
N.~N. Krasovskii and A.~I. Subbotin.
\newblock {\em Game-theoretical control problems}.
\newblock Springer, New York, 1988.

\bibitem{lions_et_al}
P.~Lions, G.~Papanicolau, and R.~Varadhan.
\newblock Homogenization of {H}amilton-{J}acobi equations.
\newblock Preprint, 1986.

\bibitem{Mane1992}
R.~Ma{\~n}{\'e}.
\newblock On the minimizing measures of {Lagrangian} dynamical systems.
\newblock {\em Nonlinearity}, 5(3):623--638, 1992.

\bibitem{Mather1991}
J.~N. Mather.
\newblock Action minimizing invariant measures for positive definite
  {Lagrangian} systems.
\newblock {\em Math. Z.}, 207(2):169--207, 1991.

\bibitem{Penot2013}
J.-P. Penot.
\newblock {\em Calculus without derivatives}.
\newblock Graduate Texts in Mathematics. Springer, New York, 2013.

\bibitem{Sorrentino}
A.~Sorrentino.
\newblock {\em Action-minimizing methods in Hamiltonian dynamics: an
  introduction to Aubry-Mather theory}.
\newblock Princeton University Press, Pinceton, NJ, USA, 2015.

\bibitem{Subbotin}
A.~I. Subbotin.
\newblock {\em Generalized solutions of first-order PDEs: The dynamical
  pptimization perspective}.
\newblock Birkh{\"a}user Boston, Boston, 1995.

\bibitem{Tran}
H.~Tran.
\newblock {\em Hamilton--Jacobi equations: theory and applications}.
\newblock Graduate Studies in Mathematics. American Mathematical Society,
  Providence,RI, 2021.

\end{thebibliography}
\end{document}